\newtheorem{theorem}{Theorem}[section]
\newtheorem{proposition}[theorem]{Proposition}
\newtheorem{corollary}[theorem]{Corollary}
\newtheorem*{maintheorem}{Main Theorem}
\newtheorem*{theorem a}{Theorem A}
\newtheorem*{theorem b}{Theorem B}
\newtheorem*{theorem c}{Theorem C}
\newtheorem*{theorem d}{Theorem D}
\newtheorem*{theorem e}{Theorem E}
\theoremstyle{definition}
\newtheorem{remark}[theorem]{Remark}
\newcommand{\C}{\ensuremath{\mathbb{C}}}
\newcommand{\R}{\ensuremath{\mathbb{R}}}
\newcommand{\g}[1]{\ensuremath{\mathfrak{#1}}}
\newcommand{\cal}[1]{\ensuremath{\mathcal{#1}}}
\newcommand{\II}{\ensuremath{I\!I}}
\DeclareMathOperator{\spann}{span}
\begin{document}

\title[Strongly $2$-Hopf hypersurfaces in $\C P^2$ and $\C H^2$]{Strongly 
$2$-Hopf hypersurfaces\\ in complex projective and hyperbolic planes}

\author[J.~C.~D\'iaz-Ramos]{Jos\'e Carlos D\'iaz-Ramos}
\address{Department of Mathematics, Universidade de Santiago de Compostela, Spain.}
\email{josecarlos.diaz@usc.es}
\author[M.~Dom\'{\i}nguez-V\'{a}zquez]{Miguel Dom\'{\i}nguez-V\'{a}zquez}
\address{Instituto de Ciencias Matem\'aticas (CSIC-UAM-UC3M-UCM), Madrid, Spain.}
\email{miguel.dominguez@icmat.es}
\author[C.~Vidal-Casti\~neira]{Cristina Vidal-Casti\~neira}
\address{Department of Mathematics, Universidade de Santiago de Compostela, Spain.}
\email{cristina.vidal@usc.es}

\thanks{The second author has been supported by a Juan de la Cierva-formaci\'on 
fellowship (Spain) and by the ICMAT Severo Ochoa project SEV-2015-0554 (MINECO, Spain). All authors have been supported by projects EM2014/009, 
GRC2013-045 and MTM2013-41335-P with FEDER funds (Spain).}

\subjclass[2010]{53B25, 53C42, 57S15, 57S20}


\begin{abstract}
We give a geometric characterization of certain hypersurfaces of 
cohomogeneity one in the complex projective and hyperbolic planes. We also 
obtain some partial classifications of austere hypersurfaces and of Levi-flat 
hypersurfaces with constant mean curvature in these spaces.
\end{abstract}

\keywords{Complex projective plane, complex hyperbolic plane, cohomogeneity 
one, strongly $2$-Hopf, polar action, constant mean curvature, austere, ruled, 
Levi-flat.}

\maketitle

\section{Introduction}
The method of equivariant differential geometry has shown to be a powerful tool for the construction of submanifolds with specific geometric properties; see for example~\cite{HL}, \cite{Hs}. Given a proper isometric action of a Lie group $H$ on a Riemannian manifold $\bar{M}$, the idea of the method is to find a curve in the orbit space $\bar{M}/H$ such that the union of the corresponding orbits in $\bar{M}$ yields a submanifold $M$ with the desired geometric property. It turns out that for many interesting properties, finding such a curve is equivalent to solving certain ordinary differential equation. Thus, existence and uniqueness of such a curve is guaranteed, for given initial conditions. The resulting submanifolds $M$ are, intrinsically, manifolds of cohomogeneity one, that is, they admit an isometric action whose principal orbits have codimension one in $M$.

In~\cite{GG}, Gorodski and Gusevskii constructed many examples of complete constant mean curvature hypersurfaces of cohomogeneity one in complex hyperbolic spaces $\C H^n$, by applying the equivariant method to several cohomogeneity two polar actions on $\C H^n$. We recall that a proper isometric action on a Riemannian manifold is called polar if there is a submanifold intersecting all the orbits of the action perpendicularly; such a submanifold must be totally geodesic, and is called a section of the action. Thus, the resulting hypersurfaces appear as the union of orbits through some curve in the $2$-dimensional section.

Recently, the authors~\cite{DDV} discovered the first examples of real hypersurfaces with exactly two distinct nonconstant principal curvatures in the complex projective and hyperbolic planes, $\C P^2$ and $\C H^2$, thus answering an open question posed by Niebergall and Ryan in~\cite{NR}. These new examples are, again, constructed using the equivariant method applied to cohomogeneity two polar actions on $\C P^2$ and $\C H^2$. (Ivey and Ryan derived a construction of the same examples by a different approach in \cite{IR}.)

In the context of real hypersurfaces in K\"ahler manifolds, the class of Hopf hypersurfaces has been studied thoroughly. Recall that, if $M$ is a real hypersurface in a K\"ahler manifold with complex structure $J$, and $\xi$ is a (locally defined) unit normal vector field on $M$, then $J\xi$ is called the Hopf vector field of $M$. Moreover, $M$ is said to be Hopf at a point $p\in M$ if $J\xi$ is an eigenvector of the shape operator $S$ of $M$ at $p$, and $M$ is called a Hopf hypersurface if it is Hopf at every point. For example, all homogeneous hypersurfaces in $\C P^n$ (that is, those which are orbits of a cohomogeneity one isometric action on $\C P^n$) happen to be Hopf. Furthermore, Hopf hypersurfaces with constant principal curvatures in $\C P^n$ and $\C H^n$ have been classified~\cite{Be:crelle}, \cite{Ki}, and it follows from these classifications that such hypersurfaces are open parts of homogeneous ones.

However, the examples constructed in \cite{DDV} and \cite{GG} are generically non-Hopf. Moreover, in $\C H^n$, $n\geq 2$, there are examples of non-Hopf homogeneous hypersurfaces~\cite{BD09}. The observation that motivates this paper is that most of the examples in \cite{DDV} and \cite{GG}, and some examples in~\cite{BD09}, share the following geometric properties:
\begin{enumerate} [{\rm (C1)}]
\item The smallest $S$-invariant distribution $\cal{D}$ of $M$ that contains $J\xi$ has rank~$2$.
\item $\cal{D}$ is integrable.
\item The spectrum of $S\rvert_\cal{D}$ is constant along the integral submanifolds of $\cal{D}$.
\end{enumerate}
Here $S$ stands for the shape operator of $M$. A real hypersurface $M$ satisfying (C1) and (C2) was called \emph{$2$-Hopf} 
in~\cite{CR} and~\cite{IR}. Motivated by this terminology, we will say that a real 
hypersurface $M$ in a K\"ahler manifold is \emph{strongly $2$-Hopf} if it 
satisfies conditions (C1), (C2) and (C3) above. The generalization of these 
definitions to $k$-Hopf and strongly $k$-Hopf hypersurfaces, for any positive 
integer $k$, is straightforward. It is important to mention that the notions of 
Hopf, $1$-Hopf and strongly $1$-Hopf real hypersurfaces agree when the ambient 
manifold is a nonflat complex space form $\C P^n$ or $\C H^n$ (see~\cite{NR}). 
Also, note that condition (C1) has been studied in the context of real 
hypersurfaces with constant principal curvatures in nonflat complex space 
forms~\cite{DD:indiana}. Finally, observe that if we define $h$ as the number 
of principal curvature spaces of $M$ onto which the Hopf vector field has 
nontrivial projection, then $M$ is Hopf precisely when $h=1$, and condition 
(C1) is equivalent to $h=2$.

The main result of this paper is a characterization of the cohomogeneity one hypersurfaces in $\C P^2$ or $\C H^2$ constructed via the equivariant method applied to a polar action of cohomogeneity two. Such characterization is achieved in terms of the strongly 2-Hopf property. It is important to mention here that polar actions on nonflat complex space forms have been classified~\cite{DDK}, \cite{PT}: up to orbit equivalence, there is exactly one polar action of cohomogeneity two on $\C P^2$, and exactly four on $\C H^2$ (see Subsection~\ref{subsec:polar}). In what follows we will denote by $\bar{M}^2(c)$ a nonflat complex space form of complex dimension $2$ and constant holomorphic curvature $c\neq 0$. Then, our main result can be stated as follows. 

\begin{maintheorem} \label{th:strongly2Hopf}
	Consider a polar action of a group $H$ acting with cohomogeneity two and with section $\Sigma$ on a nonflat complex space form $\bar{M}^2(c)$.
	
	Let $p\in\Sigma$ be a regular point, and $\sigma\colon(-\varepsilon,\varepsilon)\to\Sigma$ a unit speed curve in $\Sigma$ with $\sigma(0)=p$. Define the subset $H\cdot \sigma=\{h(\sigma(t)):h\in H,\, t\in(-\varepsilon,\varepsilon)\}$ of $\bar{M}^2(c)$. Then, there exists a finite subset $\g{w}_p$ of the unit sphere of $T_p\Sigma$ such that, for $\varepsilon$ small enough, if $\dot{\sigma}(0)\notin\g{w}_p$, the set $H\cdot \sigma$ is a strongly 2-Hopf hypersurface of $\bar{M}^2(c)$, whereas if $\dot{\sigma}(0)\in\g{w}_p$, then $H\cdot \sigma$ is a real hypersurface of $\bar{M}^2(c)$ that is Hopf at $p$.
	
	Conversely, any strongly $2$-Hopf real hypersurface in $\bar{M}^2(c)$ is locally congruent to a hypersurface constructed as above.
\end{maintheorem}

A first consequence of this result is a local characterization of the examples of constant mean curvature hypersurfaces constructed by Gorodski and Gusevskii~\cite{GG}.

\begin{corollary} \label{cor:CMC}
	Let $H$ and $\Sigma$ be as in the Main Theorem, and let $\eta\in\R$. Then, for any regular point $p\in \Sigma$ and any unit $w\in T_p \Sigma$, there is exactly one locally defined curve $\sigma$ on $\Sigma$ with $\sigma(0)=p$, $\dot{\sigma}(0)=w$, and such that the hypersurface $H\cdot \sigma$ has constant mean curvature $\eta$. 	Conversely, any strongly $2$-Hopf real hypersurface with constant mean curvature in $\bar{M}^2(c)$ is locally congruent to a hypersurface constructed in this way.
\end{corollary}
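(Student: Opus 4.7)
The plan is to invoke the Main Theorem to reduce the corollary to a question about curves in the section $\Sigma$. Every strongly $2$-Hopf hypersurface of $\bar{M}^2(c)$ is locally of the form $H\cdot\sigma$ for a unit-speed curve $\sigma$ in $\Sigma$, and even when $\dot\sigma(0)\in\g{w}_p$ the set $H\cdot\sigma$ is still a bona fide real hypersurface with a well-defined mean curvature function. Thus both assertions of the corollary translate into the problem of determining those curves $\sigma$ in $\Sigma$ for which the mean curvature of $H\cdot\sigma$ is constantly equal to $\eta$.

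The next step is to express the mean curvature of $H\cdot\sigma$ intrinsically in terms of $\sigma$. At $\sigma(t)$, a unit normal $\nu(t)$ to $H\cdot\sigma$ lies in $T_{\sigma(t)}\Sigma$ and is orthogonal to $\dot\sigma(t)$, because $\Sigma$ meets the orbits perpendicularly and $\dot\sigma$ is tangent to $H\cdot\sigma$. Since $\Sigma$ is totally geodesic, a direct computation shows that $\dot\sigma(t)$ is a principal direction with principal curvature equal to the geodesic curvature $\kappa_g(t)$ of $\sigma$ in $\Sigma$. The remaining principal curvatures coincide with those of the orbit $H\cdot\sigma(t)$, viewed as a hypersurface of $H\cdot\sigma$ with respect to $\nu(t)$, and they depend smoothly on $(\sigma(t),\dot\sigma(t))$ over the regular stratum. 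Summing, one obtains
\begin{equation*}
	H_M(t) \;=\; \kappa_g(t) \;+\; F\bigl(\sigma(t),\dot\sigma(t)\bigr),
\end{equation*}
for a smooth function $F$ on the unit tangent bundle of the regular part of $\Sigma$. The constancy of $H_M$ along each orbit is automatic because $H$ acts by isometries, so $H_M$ is indeed a function of $t$ alone.

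Imposing the CMC condition gives $\kappa_g(t) = \eta - F(\sigma(t),\dot\sigma(t))$. Writing $\dot\sigma = \cos\theta\, e_1 + \sin\theta\, e_2$ in a local orthonormal frame $\{e_1,e_2\}$ on $\Sigma$, this becomes a smooth first-order ODE system for the position $\sigma$ and the angle $\theta$, defined on the unit tangent bundle of the regular stratum. Standard existence and uniqueness (Picard--Lindel\"of) applied to the initial data $\sigma(0) = p$, $\dot\sigma(0) = w$ yields the first assertion of the corollary; the solution remains in the regular stratum for small $t$ by openness.

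For the converse, a strongly $2$-Hopf hypersurface $M$ with mean curvature $\eta$ is, by the Main Theorem, locally of the form $H\cdot\sigma$ for some unit-speed curve $\sigma$ in $\Sigma$ with $\sigma(0)$ regular and $\dot\sigma(0)\notin\g{w}_{\sigma(0)}$; the CMC assumption then forces $\sigma$ to satisfy the ODE above, so $M$ is one of the hypersurfaces produced by the construction. The main technical obstacle I anticipate is establishing the smooth dependence of $F$ on $(\sigma,\dot\sigma)$ over the regular stratum, which should ultimately rest on the smooth variation of the principal curvatures of the principal orbits of the $H$-action with respect to the normal direction.
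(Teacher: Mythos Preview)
Your proposal is correct and follows essentially the same approach as the paper's proof in \S\ref{subsec:strongly2HopfCMC}: both identify the unit normal of $H\cdot\sigma$ as lying in $T\Sigma$, split the mean curvature as the geodesic curvature of $\sigma$ plus the sum $\alpha+\beta$ of the orbit's principal curvatures (your $F$), and reduce the CMC condition to an ODE for $\sigma$ with prescribed initial data, the converse being immediate from the Main Theorem. The only cosmetic difference is that you phrase the ODE as a first-order system on the unit tangent bundle of $\Sigma_{reg}$, while the paper writes it as a second-order equation $(\bar\nabla_{\dot\sigma}\dot\sigma)(t)=(\eta-\alpha(\xi(t))-\beta(\xi(t)))\xi_{\sigma(t)}$ in normal form; these are equivalent formulations.
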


It is interesting to point out here that, in the family of constant mean 
curvature hypersufaces in $\bar{M}^2(c)$, the wealth of strongly 2-Hopf 
examples contrasts with the rigidity of those that are Hopf. Indeed, we have 
the following result: 

\begin{theorem} \label{th:Hopf}
	Let $M$ be a connected Hopf real hypersurface in $\C P^2$ or $\C H^2$ with constant mean curvature. Then $M$ is an open part of a homogeneous Hopf hypersurface.
\end{theorem}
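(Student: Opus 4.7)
The plan is to reduce Theorem~\ref{th:Hopf} to the classification of Hopf real hypersurfaces with constant principal curvatures in $\C P^n$ and $\C H^n$, due respectively to Kimura~\cite{Ki} and Berndt~\cite{Be:crelle}. Concretely, I would show that the constant-mean-curvature hypothesis, combined with the Hopf condition, forces every principal curvature of $M$ to be constant; homogeneity then follows from the cited classifications, using that $M$ is connected.

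To this end I would recall two classical consequences of the Codazzi equation for a Hopf real hypersurface in a nonflat complex space form $\bar{M}^n(c)$. First, the principal curvature $\alpha$ corresponding to the Hopf vector field $J\xi$ is constant on $M$ (this goes back to Maeda in $\C P^n$ and to Ki--Suh in $\C H^n$). Second, if $X\in\cal{D}=(J\xi)^\perp$ is principal with $SX=\lambda X$, then $JX$ is also principal, $SJX=\mu JX$, and $\lambda,\mu$ satisfy the pointwise algebraic identity $2\lambda\mu-\alpha(\lambda+\mu)=c/2$. Both facts are standard and I would invoke them directly.

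Specializing to $\bar{M}^2(c)$, the distribution $\cal{D}$ has real rank $2$ and is $J$-invariant, while $S\rvert_{\cal{D}}$ is self-adjoint; hence it is diagonalized by a $J$-paired basis with two principal curvatures $\lambda,\mu$ (possibly equal) bound by the identity above. The mean curvature of $M$ is a constant multiple of $\alpha+\lambda+\mu$, so constancy of $H$ together with constancy of $\alpha$ forces $s:=\lambda+\mu$ to be constant on $M$; then the identity forces $p:=\lambda\mu=(\alpha s+c/2)/2$ to be constant as well. Therefore $\lambda$ and $\mu$ are the two roots of the polynomial $t^{2}-s\,t+p$ with constant coefficients, hence themselves constant on $M$. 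At this point $M$ is a connected Hopf hypersurface in $\bar{M}^2(c)$ with constant principal curvatures, and the classifications in~\cite{Be:crelle} and~\cite{Ki} identify it with an open part of a homogeneous Hopf hypersurface.

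The argument is a direct assembly of classical facts, so I do not expect a substantial obstacle. The one point deserving care is the validity of the second identity at points where $\lambda=\mu$: there $S\rvert_{\cal{D}}$ is a scalar multiple of the identity, $JX$ is automatically principal with eigenvalue $\lambda$, and the identity degenerates to the consistent relation $\lambda^{2}=\alpha\lambda+c/4$, so the algebraic manipulation above remains valid uniformly on $M$.
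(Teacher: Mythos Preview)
Your proposal is correct and follows essentially the same route as the paper's proof: constancy of the Hopf principal curvature $\alpha$, the algebraic relation linking $\alpha$ to the remaining two principal curvatures (equivalent to the paper's $2\alpha(\beta+\gamma)-4\beta\gamma+c=0$), and then the observation that constant mean curvature forces $\beta+\gamma$ and hence $\beta\gamma$ to be constant, reducing to the Kimura--Berndt classification. The only cosmetic differences are the references you cite for the two classical facts and your explicit mention of the degenerate case $\lambda=\mu$, which the paper leaves implicit.
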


The homogeneous Hopf hypersurfaces in $\C P^n$ and $\C H^n$ are usually referred to as the examples in Takagi's and Montiel's lists~\cite{NR}. In the case of $\C P^2$ these are geodesic spheres and tubes around a totally geodesic $\R P^2$, whereas in $\C H^2$ they are geodesic spheres, tubes around a totally geodesic $\R H^2$, tubes around a totally geodesic $\C H^1$, and horospheres. 

We will also investigate the so-called austere hypersurfaces. These objects 
were introduced by Harvey and Lawson~\cite{HL:acta} in their study of special Lagrangian submanifolds, and are defined as those 
hypersurfaces whose principal curvature functions are invariant under 
multiplication by $-1$. Thus, austere hypersurfaces provide a subclass of minimal hypersurfaces. The classification of austere hypersurfaces in 
spheres, or in the complex projective and hyperbolic planes, is not known 
\cite{CK}, \cite{II}. In this sense we prove the following result.

\begin{theorem}\label{th:austere}
	Let $M$ be a real hypersurface of $\bar{M}^2(c)$, $c\neq 0$, whose Hopf 
	vector field has nontrivial projection onto at most two principal curvature 
	spaces (i.e.\ $h\leq 2$). Then $M$ is austere if and only if it is an open 
	part of one of the following examples:
	\begin{enumerate} [\rm (i)]
		\item a Lohnherr hypersurface in $\C H^2$, or
		
		\item a Clifford cone in $\C P^2$ or $\C H^2$, or
		
		\item a bisector in $\C H^2$.	
	\end{enumerate} 
	In particular, $M$ is strongly $2$-Hopf on the open and dense subset of nonumbilical points.
\end{theorem}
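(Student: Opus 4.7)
Since $\bar{M}^2(c)$ has real dimension $4$ and $M$ is a hypersurface, $M$ has three principal curvatures counted with multiplicity, and the austere condition forces their multiset to equal $\{0,\lambda,-\lambda\}$ for some $\lambda\ge 0$. In particular, the mean curvature vanishes identically, so $M$ is minimal. At any umbilical point necessarily $\lambda=0$, and $M$ would be locally totally geodesic there; since $\bar{M}^2(c)$ admits no totally geodesic real hypersurface, the umbilical set is nowhere dense, and on its open dense complement $U$ we have three simple principal curvatures with $1$-dimensional eigenspaces $E_0, E_+, E_-$. To prove the forward implication I will first rule out $h=1$: if $M$ were Hopf, then being CMC it would be, by Theorem~\ref{th:Hopf}, locally congruent to a homogeneous Hopf hypersurface of $\C P^2$ or $\C H^2$; a direct inspection of the principal curvatures of Takagi's and Montiel's lists (geodesic spheres, tubes around $\R P^2$, $\R H^2$ or $\C H^1$, horospheres) shows that the spectrum is never of the form $\{0,\lambda,-\lambda\}$. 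Hence $h=2$ on $U$, which is exactly condition~(C1).

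\textbf{Integrability and constancy on leaves.} To upgrade (C1) to the strongly $2$-Hopf property (which proves the ``in particular'' assertion), I will verify (C2) and (C3) on $U$ via the Codazzi equation of a real hypersurface in a nonflat complex space form. Writing $J\xi = a E_0 + b E_+ + c E_-$ with exactly one of $a,b,c$ equal to zero, evaluating Codazzi on the pairs $(E_i,E_j)$ and reading off the components along $E_k$ yields a coupled system of algebraic-differential relations among the connection coefficients $\langle \nabla_{E_i} E_j, E_k\rangle$, the function $\lambda$, the coefficients $a,b,c$, and the holomorphic curvature $c$. The symmetry of the spectrum $\{0,\lambda,-\lambda\}$ produces cancellations that, after rearrangement, give both $[\cal{D},\cal{D}]\subset \cal{D}$ (which is (C2)) and $X\lambda=0$ for every $X\in\cal{D}$ (which is (C3)). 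This is the most delicate computational step and the likely main obstacle, since the same relations in the general $h=2$ setting need not force either conclusion; it is precisely the austere shape of the spectrum that makes them collapse.

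\textbf{Identification of the examples.} With $M$ now known to be strongly $2$-Hopf and CMC on $U$, the Main Theorem and Corollary~\ref{cor:CMC} present $M$ locally as $H\cdot\sigma$, where $H$ is one of the (five up to orbit equivalence) cohomogeneity-two polar actions on $\bar{M}^2(c)$ from Subsection~\ref{subsec:polar}, and $\sigma$ is a curve in the two-dimensional section $\Sigma$ whose initial data $(\sigma(0),\dot\sigma(0))$ uniquely determines the austere solution with $\eta = 0$. In each case, the mean curvature of $H\cdot\sigma$ can be written as the geodesic curvature of $\sigma$ in $\Sigma$ plus a sum of principal-curvature contributions governed by the isotropy weights of the action along the normal directions to $\Sigma$; setting this to zero yields a first-order autonomous ODE for the angle of $\dot\sigma$ along $\Sigma$ which can be integrated explicitly. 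Matching the resulting one-parameter families of curves with known geometric descriptions identifies them, respectively, with the Clifford cones (arising in both $\C P^2$ and $\C H^2$), the bisector (in $\C H^2$), and the Lohnherr hypersurface (in $\C H^2$, appearing as a homogeneous solution). Finally, the converse direction (that each listed example is austere) is a direct computation of principal curvatures, well known in each case. The main concluding effort is thus the case-by-case ODE analysis on the five sections and the verification that no further austere solutions arise.
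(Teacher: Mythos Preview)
Your outline handles the Hopf case correctly, but steps 2 and 3 both contain genuine gaps.

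In step 2 you write $J\xi = aE_0 + bE_+ + cE_-$ with exactly one coefficient vanishing, but you never determine which one, and this matters. The paper (Proposition~\ref{prop:austereh2}) shows that if $J\xi$ projects onto $T_0$ and one of $T_{\pm\alpha}$, then the \emph{Gauss} equation (applied to $(U,V,U,A)$, then $(U,V,U,V)$ and $(A,V,U,A)$) forces incompatible relations between $\alpha$ and $b$, so this case is empty. Only the case $J\xi\in T_\alpha\oplus T_{-\alpha}$ survives, and there Gauss applied to $(A,V,A,U)$ gives $abc(a^2-b^2)(c+4\alpha^2)=0$, pinning down $a=b=1/\sqrt{2}$ (the alternative $4\alpha^2+c=0$ forces constant principal curvatures, hence the Lohnherr hypersurface, which also has $a=b$). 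From $a=b$ one then reads $U\alpha=V\alpha=0$ off \eqref{eq:derivatives1}, and integrability of $\cal{D}$ follows. Your ``symmetry of the spectrum produces cancellations via Codazzi'' does not capture this: Codazzi alone only yields the connection formulas of Proposition~\ref{prop:Levi-Civita}; both the elimination of the $T_0$--case and the conclusion $a=b$ require Gauss.

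In step 3 you invoke Corollary~\ref{cor:CMC} with $\eta=0$ and solve the resulting ODE. But that ODE encodes \emph{minimality}, which in dimension $3$ is strictly weaker than austerity (e.g.\ spectrum $\{1,2,-3\}$), and it has a full two-parameter family of solutions per action; it cannot single out the discrete list in the statement. Once step 2 is done correctly you know the transverse eigenvalue is $0$, so the right conditions are that $\sigma$ be a geodesic of $\Sigma$ \emph{and} that the orbit mean curvature vector $\cal{H}_{\sigma(t)}$ lie in $\spann\{\dot\sigma(t)\}$; this is the paper's sketched alternative at the end of Section~\ref{sec:austere}. The paper's main proof, however, bypasses the ODE analysis altogether: from $a=b=1/\sqrt{2}$ one checks $SJA=\alpha J\xi$ and $SA=0$, hence $S(J\xi)^\perp\subset\R J\xi$, so $M$ is ruled; the Lohnherr--Reckziegel classification of minimal ruled hypersurfaces then yields the three examples directly.
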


All the examples in this classification are ruled, in the sense that their maximal complex distribution is integrable and its integral submanifolds are totally geodesic in the ambient space. We briefly describe the examples in Theorem~\ref{th:austere}. The Lohnherr 
hypersurface is the only, up to congruence, complete ruled hypersurface of $\C H^n$ with 
constant principal curvatures~\cite{LR}. It is also the unique minimal homogeneous hypersurface of $\C 
H^n$ \cite{BD09}. A Clifford cone is a minimal hypersurface which is 
constructed as follows  (see also~\cite{ALS}, \cite{Goldman} and~\cite{Ki:mathann} for alternative descriptions). The Lie group $H=U(1)\times U(1)$ acts on 
$\bar{M}^2(c)$ polarly with cohomogeneity two. This action has three fixed 
points in $\C P^2$, and only one in $\C H^2$. Let $p$ be one of these fixed 
points, and $S^r$ any geodesic sphere centered at $p$. 
Then a Clifford cone with vertex $p$ is the (singular) hypersurface made of all 
geodesic rays starting from $p$ and hitting the only $2$-dimensional 
$H$-orbit that is minimal as a submanifold of $S^r$. Finally, a bisector in $\C H^n$ is a minimal hypersurface 
of cohomogeneity one defined as the set of points in $\C H^n$ that are at the 
same distance from two fixed points~\cite{Goldman}.

Another application of the Main Theorem concerns the existence of Levi-flat 
hypersurfaces of cohomogeneity one. We recall that a real hypersurface of a 
complex manifold is called Levi-flat if it is foliated by complex hypersurfaces (see 
\S\ref{subsec:strongly2HopfLevi-flat}). This notion is important in the study 
of holomorphic foliations, and indeed, an outstanding problem is the existence 
of complete, smooth Levi-flat hypersurfaces in the complex projective plane; 
nonexistence has been proved for $\C P^n$, $n\geq 3$ \cite{Siu}. Note that the 
following result contrasts with the nonexistence of Levi-flat, Hopf real 
hypersurfaces in nonflat complex space forms~\cite{Cho}.

\begin{corollary} \label{cor:Levi-flat}
	Let $H$ and $\Sigma$ be as in the Main Theorem. Then, for any regular point $p\in \Sigma$ and any unit $w\in T_p \Sigma$, there is exactly one locally defined curve $\sigma$ on $\Sigma$ with $\sigma(0)=p$, $\dot{\sigma}(0)=w$, and such that the hypersurface $H\cdot \sigma$ is Levi-flat. Conversely, any strongly $2$-Hopf, Levi-flat real hypersurface in $\bar{M}^2(c)$ is constructed locally in this way.
\end{corollary}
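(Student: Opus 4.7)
The corollary has two parts, and the converse direction is immediate from the Main Theorem: any strongly $2$-Hopf real hypersurface in $\bar{M}^2(c)$ is, by that theorem, locally congruent to some $H\cdot\sigma$ built from a curve in the section $\Sigma$, and since Levi-flatness is intrinsic, the presentation supplied by the Main Theorem is automatically of the required form. The substance of the corollary is therefore the existence and uniqueness of a Levi-flat such $H\cdot\sigma$ with prescribed initial data $\sigma(0)=p$, $\dot\sigma(0)=w$ at a regular $p\in\Sigma$.

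The plan for the direct part is to translate Levi-flatness of $H\cdot\sigma$ into a first-order ODE along $\sigma$. Recall that a real hypersurface in a K\"ahler manifold is Levi-flat precisely when the symmetric tensor $\phi S+S\phi$ vanishes on the maximal $J$-invariant distribution $\cal{C}=(J\xi)^\perp\cap TM$; here $\cal{C}$ has rank~$2$. Using the setup of the Main Theorem, at each regular $\sigma(t)\in\Sigma$ I would exploit the orthogonal decomposition $T_{\sigma(t)}\bar{M}^2(c)=T_{\sigma(t)}\Sigma\oplus T_{\sigma(t)}(H\cdot\sigma(t))$, take $\xi$ to be the unit vector of $T_{\sigma(t)}\Sigma$ perpendicular to $\dot\sigma$, and decompose the Hopf direction $J\xi$ and the distribution $\cal{C}$ with respect to a frame $\{\dot\sigma, e_1, e_2\}$, with $\{e_1,e_2\}$ an orthonormal basis of the orbit tangent. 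The shape operator $S$ of $H\cdot\sigma$, as produced in the proof of the Main Theorem, is controlled by the principal curvatures of the orbits (pointwise data depending on $\sigma(t)$ only), by the angle $\theta(t)$ of $\dot\sigma(t)$ in a parallel reference frame on $\Sigma$, and by its derivative $\dot\theta(t)$, which encodes the geodesic curvature of $\sigma$ in $\Sigma$. Substituting these expressions into $(\phi S+S\phi)|_\cal{C}=0$ and isolating the top-order term should yield a single equation of the form $\dot\theta=F(\sigma(t),\theta(t))$ with $F$ smooth near the initial point, and Picard--Lindel\"of then delivers the unique local $\sigma$ with $\sigma(0)=p$ and $\dot\sigma(0)=w$.

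The main obstacle, and the bulk of the calculation, is verifying that the vanishing of $\phi S+S\phi$ on $\cal{C}$ really collapses to a single nondegenerate first-order ODE, that is, that the coefficient of $\dot\theta$ in the resulting scalar equation is nonzero at regular points. This reduces to explicit manipulations using the expressions for $S$ from the Main Theorem together with the decomposition of $J\xi$ with respect to $\{\dot\sigma,e_1,e_2\}$; the analysis splits slightly according to whether $\Sigma$ is totally real or a complex totally geodesic surface of $\bar{M}^2(c)$, but in either case a direct computation should confirm nondegeneracy from the regularity of $p$ and the K\"ahler structure of the ambient space. No further ingredient is needed.
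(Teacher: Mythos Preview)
Your approach is essentially the same as the paper's: reduce Levi-flatness of $H\cdot\sigma$ to a prescription of the geodesic curvature of $\sigma$ in $\Sigma$, then invoke ODE existence and uniqueness. The converse direction is handled exactly as you say, via the Main Theorem.

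However, the paper's execution is considerably more direct than what you outline, and this directness dissolves your main worry about nondegeneracy. The paper uses the adapted frame $\{U,V,A\}$ from Proposition~\ref{prop:algebra}, in which $A=\dot\sigma$ and $JA=bU-aV$ span the maximal complex distribution $(J\xi)^\perp$. The Levi form has only one independent component in complex dimension~$2$, namely
\[
\langle SA,A\rangle+\langle SJA,JA\rangle=\gamma+b^2\alpha+a^2\beta,
\]
so Levi-flatness is simply $\gamma=-b^2\alpha-a^2\beta$. Since $\gamma$ is the geodesic curvature of $\sigma$ in $\Sigma$ and the right-hand side is a smooth function of $\sigma(t)$ and $\dot\sigma(t)$ alone (it depends only on the orbit $H\cdot\sigma(t)$ and the normal direction $\xi_{\sigma(t)}$), this is already a second-order ODE in normal form for $\sigma$. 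The coefficient of $\dot\theta$ in your formulation is therefore identically~$1$, and no separate nondegeneracy check is needed.

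Two minor points. First, your case split ``$\Sigma$ totally real versus complex totally geodesic'' does not arise: sections of cohomogeneity-two polar actions on $\bar{M}^2(c)$ are always totally real (see \S\ref{subsec:polar}). Second, Levi-flatness is not intrinsic in the usual sense (it involves $\II$), but it is of course preserved by ambient isometries, which is all the converse needs.
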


It is interesting to determine to what extent imposing some additional geometric conditions restricts the class of Levi-flat hypersurfaces. In this sense, Bryant~\cite{Bryant} classified Levi-flat minimal hypersurfaces in $2$-dimensional complex space forms. It follows from his result that, for $\C P^2$ and $\C H^2$, each example in his classification is invariant under a one-dimensional subgroup of the ambient isometry group. By weakening the minimality condition, and adding the strongly 2-Hopf assumption, we can obtain the following result.

\begin{theorem}\label{th:strongly2HopfCMCLevi-flat}
Let $M$ be a connected, Levi-flat, strongly $2$-Hopf real hypersurface in $\bar{M}^2(c)$, $c\neq 0$. Then $M$ has constant mean curvature if and only if it is an open part of 
\begin{enumerate} [\rm (i)]
	\item a Lohnherr hypersurface in $\C H^2$, or
	
	\item a Clifford cone in $\C P^2$ or $\C H^2$, or
	
	\item a bisector in $\C H^2$.	
\end{enumerate} 
In particular, $M$ is austere and ruled.
\end{theorem}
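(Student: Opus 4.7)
The converse direction is routine: the Lohnherr hypersurface, Clifford cones, and bisectors are all cohomogeneity-one ruled hypersurfaces whose explicit descriptions show that they are Levi-flat and austere (hence CMC), while Theorem~\ref{th:austere} guarantees that they are strongly 2-Hopf. So I concentrate on the forward implication. By the Main Theorem, the strongly 2-Hopf hypersurface $M\subset\bar{M}^2(c)$ is locally congruent to $H\cdot\sigma$, where $H$ acts polarly with cohomogeneity two and section $\Sigma$, and $\sigma$ is a unit-speed curve in $\Sigma$ with $\dot\sigma(0)\notin\g{w}_{\sigma(0)}$.

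Corollary~\ref{cor:Levi-flat} reformulates the Levi-flat hypothesis as an ODE on $\Sigma$ that uniquely determines $\sigma$ from initial data $(\sigma(0),\dot\sigma(0))$, and Corollary~\ref{cor:CMC} gives an analogous ODE for the CMC condition, with the mean curvature $\eta\in\R$ as an extra parameter. The plan is to exploit this double characterization: the Levi-flat curve through $(p,w)$ must coincide with the CMC-$\eta$ curve through $(p,w)$ for some $\eta\in\R$, a very restrictive constraint. Concretely, I first compute the mean curvature $\eta(t)$ of $H\cdot\sigma_L$ along the Levi-flat curve $\sigma_L$; after substituting the Levi-flat ODE, $\eta(t)$ becomes a scalar function depending only on $\sigma_L(t)\in\Sigma$. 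Imposing $d\eta/dt\equiv 0$ and using the Levi-flat ODE to eliminate derivatives yields an algebraic constraint on $\Sigma$, which combined with the ODE forms an overdetermined system. A case-by-case analysis for each of the five polar actions of cohomogeneity two on $\bar{M}^2(c)$ (one on $\C P^2$ and four on $\C H^2$, as listed in Subsection~\ref{subsec:polar}) should then show that the admissible Levi-flat CMC curves produce exactly the bisectors in $\C H^2$, the Clifford cones in $\C P^2$ or $\C H^2$, and the Lohnherr hypersurface in $\C H^2$.

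The main technical hurdle is carrying out this overdetermined-system analysis for each polar action and matching the resulting curves $\sigma$ to the explicit descriptions of the three families (i)--(iii); this will rely heavily on the geometric data for each polar action already assembled in the proofs of Corollaries~\ref{cor:CMC} and~\ref{cor:Levi-flat}. Once the identification is complete, the ``in particular'' statement follows immediately: each of the three examples is austere by Theorem~\ref{th:austere} and ruled by the totally geodesic complex foliation that implements its Levi-flat structure.
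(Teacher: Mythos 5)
Your outline (use the Main Theorem to write $M=H\cdot\sigma$ locally, observe via the ODE uniqueness behind Corollaries~\ref{cor:CMC} and~\ref{cor:Levi-flat} that $\sigma$ must simultaneously solve the Levi-flat and the CMC-$\eta$ equations, then classify the solutions action by action) is coherent as a program, but the proof is missing precisely where all the content lies. The forward implication, in your setup, reduces entirely to the case-by-case analysis of the overdetermined system on the section $\Sigma$ for each of the five cohomogeneity-two polar actions, and you leave this as a step that ``should'' work. Moreover, your stated source of the required data is not accurate: the proofs of Corollaries~\ref{cor:CMC} and~\ref{cor:Levi-flat} are abstract existence-and-uniqueness arguments for the curvature equations $\gamma(t)=\eta-\alpha(\xi(t))-\beta(\xi(t))$ and $\gamma(t)=-b(\xi(t))^2\alpha(\xi(t))-a(\xi(t))^2\beta(\xi(t))$; they do not compute, for any of the five actions, the orbit principal curvatures $\alpha(\xi)$, $\beta(\xi)$, the projections $a,b$, or the mean curvature vector field $\cal{H}$ on $\Sigma$. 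Those explicit computations are exactly the ``elementary but long calculations'' that the paper itself skips even in its alternative sketch at the end of Section~\ref{sec:austere}, so as written your argument contains neither the classification of the admissible curves nor their identification with bisectors, Clifford cones and the Lohnherr hypersurface.

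For comparison, the paper's proof avoids any per-action analysis and works intrinsically with the frame of Proposition~\ref{prop:algebra}: combining the CMC relation $\gamma=\eta-\alpha-\beta$ with the Levi-flat relation $\gamma=-b^2\alpha-a^2\beta$, differentiating along $A$ and using \eqref{eq:Dintegrable}, one first gets $\gamma=\eta/4$ and $\alpha+\beta=3\eta/4$, and then $\eta(8\alpha^2-6\eta\alpha+3\eta^2-4c)=0$. If $\eta\neq 0$, the principal curvatures are constant, and the classification of non-Hopf hypersurfaces with constant principal curvatures rules this case out (only the Lohnherr hypersurface is Levi-flat among them, and it is minimal); hence $\eta=0$, so $M$ is austere and Theorem~\ref{th:austere} concludes. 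If you want to keep your extrinsic route, you must actually determine $\cal{H}$ and the shape operators of the orbits along $\Sigma$ for each of the five actions and solve the resulting overdetermined system, which is a substantially longer task than the intrinsic reduction to the austere case.
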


This work is organized as follows. In Section~\ref{sec:preliminaries} we 
establish notation, recall some basic concepts and results about submanifold geometry and polar actions
on complex space forms, and prove Theorem~\ref{th:Hopf}. In Section~\ref{sec:LeviCivita} we prove some formulas for the Levi-Civita connection of a hypersurface satisfying $h=2$. The proof of the Main Theorem is presented in 
Section~\ref{sec:strongly2Hopf}: in \S\ref{subsec:construction} we explain how to construct strongly 2-Hopf hypersurfaces, and 
in~\S\ref{subsec:strongly2Hopf} we characterize these examples. Then, Section~\ref{sec:austere} is devoted to the study of austere hypersurfaces and the proof of Theorem~\ref{th:austere}. Finally, in 
Section~\ref{sec:aplications}, we give some applications of the Main Theorem and 
prove the remaining theorems.

\section{Preliminaries}\label{sec:preliminaries}

In this section we settle some notation and terminology concerning submanifold theory and polar actions, with particular emphasis on the case of nonflat complex space forms.

\subsection{Submanifold geometry in complex space forms}\label{subsec:submanifold}

Let $M$ be a smooth submanifold of a Riemannian manifold $\bar{M}$. Since the arguments that follow are local, we can assume that $M$ is embedded. 
We denote by $T_{p}M$ and $\nu_{p}M$ the tangent and normal spaces to $M$ at $p$ respectively. Let $X$, $Y$, $Z$, $W$ be tangent vector fields along $M$, and let $\xi$ be normal. We denote by $\langle\,\cdot\,,\,\cdot\,\rangle$ the metric of $\bar{M}$, by $\bar{\nabla}$ its Levi-Civita connection, and by $\bar{R}$ its curvature tensor, which we adopt with the following sign convention: $\bar{R}(X,Y)Z=[\bar{\nabla}_X,\bar{\nabla}_Y]Z-\bar{\nabla}_{[X,Y]}Z$. The Levi-Civita connection of $M$ is denoted by $\nabla$, and is determined by the Gauss formula
\[
\bar{\nabla}_X Y=\nabla_X Y+\II(X,Y),
\]
where $\II$ is the second fundamental form of~$M$. The Weingarten formula reads
\[
\bar{\nabla}_X \xi=-S_\xi X+\nabla^\perp_X \xi
\]
where $S_\xi$ is the shape operator of $M$ with respect to $\xi$, and $\nabla^\perp$ is the normal connection of $M$. Moreover, we have the relation $\langle \II(X,Y), \xi \rangle=\langle S_{\xi}X,Y \rangle$.

The shape operator $S_\xi$ is a self-adjoint endomorphism with respect to the induced metric on $M$, and thus it can be diagonalized with real eigenvalues. These eigenvalues are called the principal curvatures of $M$ with respect to $\xi$, the corresponding eigenspaces are the principal curvature spaces, and the corresponding eigenvectors are the principal curvature vectors. The mean curvature vector field $\cal{H}$ of $M$ is defined as the trace of the second fundamental form. We say that $M$ has parallel second fundamental form (resp.\ parallel mean curvature) if $\II$ (resp.\ $\cal{H}$) is parallel with respect to the normal connection $\nabla^\perp$. We say that $M$ has flat normal bundle if every normal vector can be extended locally to a parallel normal vector field or, equivalently, if the curvature of $\nabla^\perp$ is zero.

Now let $M$ be a hypersurface of $\bar{M}$, and $\xi$ a unit normal vector field on $M$. In this case we simply write $S$ for the shape operator $S_\xi$. The Codazzi equation is then written as
\[
\langle\bar{R}(X,Y)Z,\xi\rangle= \langle(\nabla_X S)Y,Z\rangle-\langle(\nabla_Y S)X,Z\rangle, 
\]
and, by letting $R$ denote the curvature tensor of $M$, the Gauss equation reads
\[
\langle\bar{R}(X,Y)Z,W\rangle=\langle{R}(X,Y)Z,W\rangle +\langle SX,Z\rangle\langle SY,W\rangle-\langle SX,W\rangle\langle SY,Z\rangle.
\]

We now restrict our attention to the case $\bar{M}= \bar{M}^n(c)$, where $\bar{M}^n(c)$ represents a complex space form of complex dimension $n$ and constant holomorphic curvature $c\in \R$, that is, a complex projective space $\C P^n$ if $c>0$, a complex Euclidean space $\C^n$ if $c=0$, or a complex hyperbolic space $\C H^n$ if $c<0$. We denote by $J$ the complex structure of $\bar{M}^n(c)$. Since $\bar{M}^n(c)$ is K\"ahler, we have that $\bar{\nabla}J=0$. We will also need the formula of the curvature tensor $\bar{R}$ of a complex space form of constant holomorphic sectional curvature $c$:
\begin{align*}
	\langle\bar{R}(X,Y)Z,W\rangle={}&
	\frac{c}{4}\Bigl(
	\langle Y,Z\rangle\langle X,W\rangle
	-\langle X,Z\rangle\langle Y,W\rangle\\[-1ex]
	&\phantom{\frac{c}{4}\Bigl(}
	+\langle JY,Z\rangle\langle JX,W\rangle
	-\langle JX,Z\rangle\langle JY,W\rangle
	-2\langle JX,Y\rangle\langle JZ,W\rangle
	\Bigr).
\end{align*}

Let $M$ be a real hypersurface of $\bar{M}^n(c)$, that is, a submanifold with real codimension one. The tangent vector field $J\xi$ is called the Hopf or Reeb vector field of $M$. We define the integer-valued function $h$ on $M$ as the number of principal curvature spaces onto which $J\xi$ has nontrivial projection or, equivalently, as the dimension of the minimal subspace of the tangent space to $M$ that contains $J\xi$ and is invariant under the shape operator $S$. Thus, $M$ is said to be Hopf at a point $p$ if $h(p)=1$, and is called a Hopf hypersurface if $h=1$ on $M$, that is, if $J\xi$ is a principal curvature vector field everywhere. If $h$ is constantly equal to an integer number $k$, then there is a smooth distribution $\cal{D}$ of rank $k$ on $M$ that consists of the minimal subspace of the tangent space to $M$ at each point that contains $J\xi$ and is $S$-invariant. If $\cal{D}$ is integrable, then $M$ is said to be $k$-Hopf. If additionally, the principal curvatures of $M$ corresponding to the principal directions in $\cal{D}$ are constant along the leaves of $\cal{D}$, then we will say that $M$ is strongly $k$-Hopf.

For more information on submanifold geometry and real hypersurfaces in complex space forms we refer to~\cite{BCO03}, \cite{CR} and \cite{NR}.

\medskip

As an application of well-known results about Hopf real hypersurfaces in nonflat complex space forms, we prove Theorem~\ref{th:Hopf}.

\begin{proof}[Proof of Theorem~\ref{th:Hopf}]
	Let $M$ be a Hopf real hypersurface in $\bar{M}^2(c)$, $c\neq 0$, with constant mean curvature. Let $\alpha$ denote the principal curvature of the Hopf vector field. By \cite[Theorem~2.1]{NR} we know that $\alpha$ is constant on $M$. Now, by \cite[Corollary~2.3(ii)]{NR}, if $\beta$ and $\gamma$ denote the other principal curvatures of $M$, we have that $2\alpha(\beta+\gamma)-4\beta\gamma+c=0$. This equation, together with the constancy of $\alpha$ and $\alpha+\beta+\gamma$, implies that $\beta$ and $\gamma$ are also constant. Hence, $M$ is a Hopf hypersurface with constant principal curvatures in $\bar{M}^2(c)$, $c\neq 0$. According to their classification by Kimura~\cite{Ki} and Berndt~\cite{Be:crelle}, we conclude that $M$ must be an open part of a homogeneous Hopf hypersurface.
\end{proof}

\subsection{Polar actions}\label{subsec:polar}
Let $\bar{M}$ be a Riemannian manifold, and $H$ a connected group of isometries of $\bar{M}$. The isometric action $H\times \bar{M}\to\bar{M}$, $(h,p)\mapsto h(p)$, is called proper if the map $H\times\bar{M} \to \bar{M}\times\bar{M}$, $(h, p)\mapsto (g(p), p)$, is proper, which implies that the orbits of the $H$-action are embedded, the space $\bar{M}/H$ of orbits is Hausdorff, and the isotropy groups $H_p=\{h\in H: h(p)=p\}$ are compact. An orbit of a proper action is called principal if its isotropy groups are minimal among all the isotropy groups of the orbits. In particular, principal orbits have maximal dimension. The codimension of a principal orbit is called the cohomogeneity of the action. If an orbit has codimension higher than the cohomogeneity, it is called singular. A point is said to be regular if it lies on a principal orbit. 

Two isometric actions are called orbit equivalent if they have the same orbits, modulo an isometry of the ambient space. A submanifold of $\bar{M}$ is called homogeneous if it is an orbit of an isometric action on $\bar{M}$. A Riemannian manifold is said to be of cohomogeneity one if it admits a cohomogeneity one isometric action.

We say that a proper isometric action $H\times \bar{M}\to\bar{M}$ is polar if there exists a submanifold $\Sigma$ of $\bar{M}$ that intersects all the $H$-orbits, and every such intersection is perpendicular. Such a submanifold $\Sigma$ is totally geodesic, has the dimension of the cohomogeneity of the action, and is called a section. Polar actions admit sections through any given point in $\bar{M}$. It turns out that the set $\Sigma_{reg}$ of regular points in $\Sigma$ is an open and dense subset of $\Sigma$. For more information on isometric and polar actions we refer to~\cite[Chapter~2]{BCO03}.

In this work we use polar actions to construct examples of interesting hypersurfaces in nonflat complex space forms of dimension two. Thus, let us comment on their classification. Polar actions on complex projective spaces were classified (up to orbit equivalence) by Podest\`a and Thorbergsson~\cite{PT}, whereas for complex hyperbolic spaces the corresponding classification was obtained by the first two authors and Kollross~\cite{DDK}, although the case of the complex hyperbolic plane had previously been solved by Berndt and the first author~\cite{BD:agag}. We are interested in the case of cohomogeneity two. It is known that, in this case, sections $\Sigma$ are totally real, that is, $\langle JT\Sigma,T\Sigma\rangle=0$. In particular, they are totally geodesic real projective planes $\R P^2$ if $c>0$, and totally geodesic real hyperbolic planes $\R H^2$ if $c<0$. 

In $\C P^2$ there is only one polar action of cohomogeneity two up to orbit equivalence, namely the action of the group $U(1)\times U(1)$, which is induced from the standard action of $U(1)\times U(1)\times U(1)$ on the $5$-sphere via the Hopf map. This action has three fixed points, the other orbits are contained in the geodesic spheres around each one of these points, and topologically they can be circles or $2$-dimensional tori (the latter are the principal orbits). 

In $\C H^2$ there are four polar actions of cohomogeneity two up to orbit equivalence. One of them is dual to the one described for $\C P^2$. It is the action of $U(1)\times U(1)$ on $\C H^2$, which has only one fixed point in this case, and the other orbits are again circles or $2$-tori contained in the geodesic spheres around the fixed point. In order to describe the other three examples we introduce some notation (see~\cite{BD:agag} for details). Let $\g{g}=\g{su}(1,2)$ be the Lie algebra of the isometry group of $\C H^2$, and $\g{k}=\g{s}(\g{u}(1)\oplus\g{u}(2))$ the Lie algebra of the isotropy group of some point of $\C H^2$. The corresponding Cartan decomposition can be written as $\g{g}=\g{k}\oplus\g{p}$, where $\g{p}$ is the orthogonal complement of $\g{k}$ in $\g{g}$ with respect to the Killing form of $\g{g}$. Then, a choice of a maximal abelian subspace $\g{a}$ of $\g{p}$ determines a decomposition $\g{g}=\g{g}_{-2\alpha}\oplus\g{g}_{-\alpha}\oplus\g{g}_{0} \oplus\g{g}_{\alpha}\oplus\g{g}_{2\alpha}$, called the restricted root space decomposition. Here, $\g{g}_0=\g{k}_0\oplus\g{a}$, where $\g{k}_0\cong\g{u}(1)$ is the centralizer of $\g{a}$ in $\g{k}$. Thus, the other three cohomogeneity two polar actions on $\C H^2$ correspond to the connected subgroups $H$ of $SU(1,2)$ with the following Lie algebras: $\g{h}=\g{g}_0$, $\g{h}=\g{k}_0\oplus\g{g}_{2\alpha}$, and $\g{h}=\ell\oplus\g{g}_{2\alpha}$, where~$\ell$ is a $1$-dimensional vector subspace of $\g{g}_\alpha$. Topologically, the principal orbits of these first two actions are $2$-dimensional cylinders, while those of the last one are $2$-dimensional planes.

\section{Levi-Civita connection of a hypersurface with $h=2$}\label{sec:LeviCivita}

In this section we calculate the Levi-Civita connection of a real hypersurface $M$ in $\bar{M}^2(c)$, $c\neq 0$, satisfying $h=2$. This information will be used several times throughout this paper.

Let $M$ be a real hypersurface with unit normal vector field $\xi$ and shape operator $S$ in a nonflat complex space form $\bar{M}^2(c)$. Let $\alpha$, $\beta$ and $\gamma$ be the three principal curvatures of $M$. For each principal curvature $\lambda$, we denote by $T_\lambda$ the corresponding principal curvature distribution; note that, in principle, this distribution might be singular. We will denote by $\Gamma(T_\lambda)$ the module of smooth vector fields $X$ on $M$ such that $X_p\in T_\lambda(p)$ for every point $p$. 

For the following proposition we only assume that $M$ satisfies condition (C1) in the definition of strongly 2-Hopf hypersurface, that is, the Hopf vector field $J\xi$ of $M$ has nontrivial projections onto exactly $h=2$ principal curvature spaces, say onto $T_\alpha$ and $T_\beta$. This implies that  $\alpha \neq \beta$ at every point. 

\begin{proposition}\label{prop:algebra}
	There are positive smooth functions $a$, $b\colon M\to\R$ with $a^2+b^2=1$, and an orthonormal frame $\{U,V,A\}$ on $M$ with $U\in\Gamma(T_\alpha)$, $V\in\Gamma(T_\beta)$, $A\in\Gamma(T_\gamma)$, such that
\[
		J\xi = aU+bV, \qquad JU  = -bA-a\xi, \qquad JV  = aA-b\xi, \qquad JA=bU-aV.
\]
\end{proposition}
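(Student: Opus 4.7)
The statement is a pointwise fact in the $4$-dimensional tangent space $T_p\bar{M}^2(c)$, combined with a routine smoothness upgrade, and its content is pure Hermitian linear algebra.

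First I would define the quadruple $(a,b,U,V)$ by decomposing $J\xi$ according to the orthogonal splitting $T_\alpha\oplus T_\beta\oplus(T_\alpha\oplus T_\beta)^\perp$ of $TM$: write $aU$ and $bV$ for the orthogonal projections of $J\xi$ onto $T_\alpha$ and $T_\beta$, with $U$, $V$ of unit length and $a$, $b>0$. By the hypothesis $h=2$ both projections are nonzero, so $a$, $b$, $U$, $V$ are smooth on the open set where $h=2$ holds, and $a^2+b^2=|J\xi|^2=1$, using that $T_\alpha\perp T_\beta$ (since $\alpha\neq\beta$ and $S$ is self-adjoint). Next, I would choose a unit vector field $A$ orthogonal to $U$ and $V$; since $M$ has dimension $3$, this determines $A$ up to sign. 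Self-adjointness of $S$ and $S$-invariance of $T_\alpha\oplus T_\beta$ force $(T_\alpha\oplus T_\beta)^\perp$ to be $S$-invariant as well, so $A$ is automatically a principal direction, lying in some $T_\gamma$.

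To compute $JU$, $JV$, and $JA$ I would expand each in the ambient orthonormal basis $\{U,V,A,\xi\}$ and bring in three inputs. The skew-adjointness relation $\langle JX,Y\rangle=-\langle X,JY\rangle$ kills the diagonal components and, combined with $J\xi=aU+bV$, fixes the $\xi$-components of $JU$, $JV$, $JA$ as $-a$, $-b$, $0$, respectively. Applying $J$ to $J\xi=aU+bV$ and enforcing $J^2\xi=-\xi$ then kills the $V$-component of $JU$ and the $U$-component of $JV$, and yields the linear relation $ar+br'=0$ between the $A$-coefficients $r$ of $JU$ and $r'$ of $JV$. Finally, the isometry conditions $|JU|=|JV|=1$ combined with $a^2+b^2=1$ give $r=\pm b$ and $r'=\pm a$; together with $ar+br'=0$ this leaves two possibilities interchanged by $A\mapsto -A$, and I would fix the sign of $A$ so that $r=-b$, $r'=a$. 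The formula $JA=bU-aV$ then drops out from $\langle JA,U\rangle=-\langle A,JU\rangle=b$ and $\langle JA,V\rangle=-\langle A,JV\rangle=-a$, the $\xi$-component being already zero.

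No step here is hard. The only genuinely non-trivial point is that a single sign choice of $A$ must simultaneously produce the prescribed coefficients in both $JU$ and $JV$, and this compatibility is exactly what the relation $ar+br'=0$ coming from $J^2=-\Id$ guarantees.
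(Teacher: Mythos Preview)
Your proof is correct and follows essentially the same route as the paper's: decompose $J\xi$ along the two relevant eigenspaces to define $a,b,U,V$, pick $A$ orthogonal to $U$ and $V$, and then use $J^2\xi=-\xi$ together with $|JU|=|JV|=1$ and the skew-symmetry of $J$ to pin down all the coefficients, absorbing a single sign into the choice of $A$. One small remark: when you write the splitting as $T_\alpha\oplus T_\beta\oplus(T_\alpha\oplus T_\beta)^\perp$ you are implicitly assuming both eigenspaces are one-dimensional; if $\gamma$ coincides with $\alpha$ or $\beta$ then $T_\alpha\oplus T_\beta$ is all of $TM$ and that complement is trivial, but your actual argument (defining $A$ as a unit vector orthogonal to $U$ and $V$, then observing $\mathrm{span}\{U,V\}$ is $S$-invariant so $A$ is principal) works regardless and matches the paper's treatment of the multiplicity-two case.
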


\begin{proof}
	Since $J\xi$ is a unit vector field tangent to $M$ which has nontrivial projection onto $T_\alpha$ and $T_\beta$, we can write $J\xi=a U+b V$, where $U\in\Gamma(T_\alpha)$, $V\in\Gamma(T_\beta)$ are unit vector fields, and $a$, $b$ are smooth functions on $M$ satisfying $a^2+b^2=1$, and $a,b>0$. Let $A\in\Gamma(T_\gamma)$ be a unit vector field; take it perpendicular to $U$ and $V$ in case $\gamma$ has multiplicity $2$. Then, $\{U,V,A\}$ constitutes an orthonormal frame on $M$.
	
	As $-\xi=J^2\xi=aJU+bJV$, and $a\neq 0$, taking inner product with $V$ we get that $\langle JU,V\rangle=0$. This implies that $JU$, $JV\in\spann\{A,\xi\}$. Now, $\langle JU,\xi\rangle=-\langle U,J\xi \rangle=-a$, and since $U$ has unit length, we obtain $\langle JU,A\rangle=\pm b$. By changing the sign of $A$ if necessary, we can assume that $JU=-bA-a\xi$. A similar argument shows that $JV=aA-b\xi$. Finally, these expressions  imply $\langle JA,U\rangle=b$, $\langle JA,V\rangle=-a$, and $\langle JA,\xi\rangle=0$, from where the result follows.
\end{proof}

\begin{proposition}\label{prop:Levi-Civita}
	Assume that $\alpha\neq\beta\neq\gamma\neq \alpha$ at every point. Then the Levi-Civita connection of $M$ in terms of the basis $\{U,V,A\}$ is given by the following equations:
	\begin{align*}
		\nabla_U U  &=\frac{ V\alpha }{\alpha -\beta }V-\frac{ 3 a b c-4 A\alpha }{4 (\alpha -\gamma )}A,
		&
		\nabla_U V  &=-\frac{V\alpha}{\alpha-\beta}U+\Bigl(\alpha+\frac{3a^2bc-4aA\alpha}{4b(\alpha-\gamma)}\Bigr)A,
		\\
		\nabla_V V  &=-\frac{U\beta}{\alpha-\beta}U+\frac{3abc+4A\beta}{4(\beta-\gamma)}A,
		&
		\nabla_V U  &=\frac{U\beta}{\alpha-\beta}V-\Bigl(\beta+\frac{3ab^2c+4bA\beta}{4a(\beta-\gamma)}\Bigr)A,
		\\
		\nabla_A U  &=\Bigl(\gamma - \frac{Ab}{a}\Bigr)V+\frac{U\gamma}{\alpha-\gamma}A,
		&
		\nabla_U A  &=\frac{3abc-4A\alpha}{4(\alpha-\gamma)}U-\Bigl(\alpha+\frac{3a^2bc-4aA\alpha}{4b(\alpha-\gamma)}\Bigr)V,
		\\
		\nabla_A V  &=\Bigl(-\gamma + \frac{Ab}{a}\Bigr)U+\frac{V\gamma}{\beta-\gamma}A,
		&
		\nabla_V A  &=\Bigl(\beta+\frac{3ab^2c+4bA\beta}{4a(\beta-\gamma)}\Bigr)U-\frac{3abc+4A\beta}{4(\beta-\gamma)}V,
		\\
		\nabla_A A  &=-\frac{U\gamma}{\alpha-\gamma}U-\frac{V\gamma}{\beta-\gamma}V.
	\end{align*}
	Moreover:
	\begin{equation}\label{eq:derivatives1}
	\begin{aligned}
		Ua &=  \frac{b V\alpha}{\alpha-\beta},
		&
		Va &= \frac{b U\beta}{\alpha-\beta},
		&
		Aa &= -\frac{b Ab}{a},\\
		Ub &= -\frac{a V\alpha}{\alpha-\beta},
		&
		Vb &=-\frac{a U\beta}{\alpha-\beta},
		&
		V\gamma &=\frac{a(\gamma-\beta)U\gamma}{b(\alpha-\gamma)},
	\end{aligned}
	\end{equation}
	\vspace{-1ex}
\begin{equation}\label{eq:derivatives2}
\small{
		\begin{aligned} 
			Ab =&{}\; a \gamma+\frac{a c \left(a^2-2 b^2\right)}{4 (\alpha -\beta
				)}-\frac{3 a^3 c (\beta -\gamma )}{4 (\alpha -\beta ) (\alpha -\gamma )}-\frac{\alpha  a (\beta-\gamma )}{\alpha -\beta }+\frac{a^2 (\beta -\gamma )}{b (\alpha -\beta ) (\alpha -\gamma )}A\alpha,\\
			A\beta =&{}-\frac{3abc}{4}-\frac{a\beta(\beta-\gamma)}{b}-\frac{ac(\beta-\gamma)}{4b(\alpha-\gamma)}-\frac{a\alpha(\beta-\gamma)^2}{b(\alpha-\gamma)}-\frac{3a^3c(\beta-\gamma)^2}{4b(\alpha-\gamma)^2}+\frac{a^2(\beta-\gamma)^2}{b^2(\alpha-\gamma)^2}A\alpha.
		\end{aligned}
	}
\end{equation}
	
\end{proposition}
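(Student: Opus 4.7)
The proof is a systematic computation combining three ingredients: the K\"ahler identity $\bar\nabla J=0$, the algebraic formulas of Proposition~\ref{prop:algebra}, and the Codazzi equation of $\bar{M}^2(c)$. The plan is to treat the nine independent connection coefficients $\langle\nabla_XY,Z\rangle$ with $X,Y,Z\in\{U,V,A\}$, together with the derivatives of $a,b,\alpha,\beta,\gamma$ along $U,V,A$, as unknowns, and to assemble enough relations among them to recover every formula stated.

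First I would exploit $\bar\nabla_X(J\xi)=-JSX$ for $X\in\{U,V,A\}$. Writing $J\xi=aU+bV$ and expanding the left-hand side via the Gauss--Weingarten formulas, then comparing tangent and normal parts, produces the derivatives $Ua,Ub,Va,Vb$ of \eqref{eq:derivatives1}---the identity $Aa=-bAb/a$ follows by differentiating $a^2+b^2=1$---together with the $A$-components of $\nabla_UU,\nabla_UV,\nabla_VU,\nabla_VV$. Next I would apply the same idea to $\bar\nabla_X(JY)=J\bar\nabla_XY$ for $Y\in\{U,V,A\}$, using the explicit formulas $JU=-bA-a\xi$, $JV=aA-b\xi$, $JA=bU-aV$ from Proposition~\ref{prop:algebra} on one side and Gauss--Weingarten on the other; this determines the remaining connection coefficients in terms of $A\alpha$, $Ab$ and the derivatives of $\gamma$.

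The $A$-components of $\nabla_AU$ and $\nabla_AV$ come from Codazzi applied to the triples $(U,A,A)$ and $(V,A,A)$, giving $\langle\nabla_AU,A\rangle=U\gamma/(\alpha-\gamma)$ and $\langle\nabla_AV,A\rangle=V\gamma/(\beta-\gamma)$, and by orthonormality also $\nabla_AA=-U\gamma/(\alpha-\gamma)\,U-V\gamma/(\beta-\gamma)\,V$. Comparing then the $U$-component of $\bar\nabla_A(JV)=J\bar\nabla_AV$ with these expressions forces the stated relation $V\gamma=a(\gamma-\beta)U\gamma/(b(\alpha-\gamma))$. The remaining unknowns $Ab$ and $A\beta$ are pinned down by the Codazzi equation in the form
\begin{equation*}
\langle(\nabla_XS)Y,Z\rangle-\langle(\nabla_YS)X,Z\rangle=\tfrac{c}{4}\bigl(-\langle JY,Z\rangle\langle X,J\xi\rangle+\langle JX,Z\rangle\langle Y,J\xi\rangle+2\langle JX,Y\rangle\langle Z,J\xi\rangle\bigr),
\end{equation*}
where $\langle(\nabla_XS)Y,Z\rangle=(X\lambda_Y)\langle Y,Z\rangle+(\lambda_Y-\lambda_Z)\langle\nabla_XY,Z\rangle$ for the principal curvatures $\lambda_Y,\lambda_Z$. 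Applied to the triples $(U,A,V)$ and $(V,A,U)$, it yields a system whose solution isolates $Ab$; a further application of Codazzi to a triple involving $V$ and $A$ then produces $A\beta$.

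The main difficulty is purely organizational: the system is large and the intermediate expressions carry many denominators $(\alpha-\beta)$, $(\alpha-\gamma)$, $(\beta-\gamma)$, so the equations must be combined in a carefully chosen order for the compact formulas of \eqref{eq:derivatives2} to emerge. In particular, $A\beta$ only appears as a clean function of $a,b,c,\alpha,\beta,\gamma,A\alpha$ after $Ab$, $V\gamma$ and $Aa$ have been eliminated simultaneously; a careless ordering leaves residual terms that do not obviously simplify.
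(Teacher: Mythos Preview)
Your overall strategy---combine the K\"ahler identity with Codazzi---is exactly the paper's, but your division of labor will not work as written. The K\"ahler relations $\bar\nabla_X(JY)=J\bar\nabla_XY$ never see derivatives of the principal curvatures: expanding either side via Gauss--Weingarten involves only the connection coefficients $\langle\nabla_XY,Z\rangle$, the \emph{values} $\alpha,\beta,\gamma$ (through $S$), and the derivatives of $a,b$ (through $J\xi=aU+bV$, $JU=-bA-a\xi$, etc.). Concretely, $\bar\nabla_U(J\xi)=-\alpha JU$ yields only $Ua=b\langle\nabla_UU,V\rangle$, $Ub=-a\langle\nabla_UU,V\rangle$, and the single relation $a\langle\nabla_UU,A\rangle+b\langle\nabla_UV,A\rangle=\alpha b$; it cannot produce $Ua=bV\alpha/(\alpha-\beta)$ as in~\eqref{eq:derivatives1}, nor can your second step return anything ``in terms of $A\alpha$'', because $V\alpha$ and $A\alpha$ simply do not occur in any K\"ahler relation.

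What is missing is the block of Codazzi applications the paper performs first and you omit: the triples $(U,V,U)$, $(U,A,U)$, $(U,V,V)$, $(V,A,V)$ give directly
\[
\langle\nabla_UU,V\rangle=\frac{V\alpha}{\alpha-\beta},\qquad
\langle\nabla_UU,A\rangle=\frac{1}{\alpha-\gamma}\Bigl(A\alpha-\frac{3abc}{4}\Bigr),
\]
and the analogous formulas for $\langle\nabla_VV,U\rangle$, $\langle\nabla_VV,A\rangle$. Only after these are in hand do the K\"ahler relations convert into the stated expressions for $Ua,Ub,Va,Vb$ and for $\langle\nabla_UV,A\rangle$, $\langle\nabla_VU,A\rangle$, $\langle\nabla_AU,V\rangle$. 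Your remaining steps---Codazzi on $(U,A,A)$, $(V,A,A)$ for $\nabla_AA$, the K\"ahler check giving the $V\gamma$ relation, and two further Codazzi equations to isolate $Ab$ and $A\beta$---then go through and match the paper's use of the triples $(U,V,A)$ and $(U,A,V)$. So the plan is right in spirit; you just need to invoke Codazzi on the four diagonal triples \emph{before}, not after, the K\"ahler computations.
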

\begin{proof}
	Using the fact that $U$ and $A$ are orthogonal eigenvectors of $S$ associated with the eigenvalues $\alpha$ and $\gamma$ respectively, we get
	\begin{align*}
		\langle(\nabla_U S)A,U\rangle
		&{}=\langle\nabla_U SA-S\nabla_UA,U\rangle
		=\langle\nabla_U(\gamma A),U\rangle-\langle \nabla_UA,SU\rangle\\
		&{}=(U\gamma)\langle A,U\rangle+\gamma\langle\nabla_UA,U\rangle
		-\alpha\langle\nabla_UA,U\rangle
		=(\alpha-\gamma)\langle\nabla_UU,A\rangle.
	\end{align*}
	As $U$ is a unit vector field we have $\langle\nabla_AU,U\rangle=0$. Thus, proceeding as before, we get $\langle(\nabla_A S)U,U\rangle=A\alpha$. Moreover, the expression of the curvature tensor of a complex space form yields $\langle\bar{R}(U,A)U,\xi\rangle=-3abc/4$. Hence, the Codazzi equation applied to the triple $(U,A,U)$ implies
	\[
	\langle\nabla_UU,A\rangle=\frac{1}{\alpha-\gamma}
	\left(A\alpha-\frac{3abc}{4}\right).
	\]
	
	Applying the Codazzi equation to the triples $(U,V,U)$, $(U,A,U)$, $(U,A,A)$, $(U,V,V)$, $(V,A,V)$ and $(V,A,A)$, we obtain in a similar way:
\begin{equation} \label{eq:nablas1}
	\begin{aligned}
	\langle\nabla_U U,V\rangle &{}=\frac{V\alpha}{\alpha-\beta},
	&
	\langle\nabla_U U,A\rangle &{}=\frac{1}{\alpha-\gamma}\Bigl(A\alpha-\frac{3abc}{4}\Bigr),
	&
	\langle\nabla_A A,U\rangle &=-\frac{U\gamma}{\alpha-\gamma},\\
	\langle\nabla_V V,U\rangle &{}=-\frac{U\beta}{\alpha-\beta},
	&
	\langle\nabla_V V,A\rangle &=\frac{1}{\beta-\gamma}\Bigl(A\beta+\frac{3abc}{4}\Bigr),
	&	
	\langle\nabla_A A,V\rangle
	&=-\frac{V\gamma}{\beta-\gamma}.
	\end{aligned}
	\end{equation}
	
	Since $J$ is parallel with respect to the Levi-Civita connection $\bar\nabla$ of $\bar{M}^2(c)$, we have $\bar{\nabla}_U J\xi=J\bar{\nabla}_U\xi=-JSU=-\alpha JU$. Taking this into account, and using Proposition~\ref{prop:algebra} and~\eqref{eq:nablas1}, we get
	\begin{align*}
		0\!&=U\langle A,J\xi\rangle=\langle\bar{\nabla}_U A,J\xi\rangle+\langle A,\bar{\nabla}_U J\xi\rangle
		=a\langle\nabla_U A,U\rangle+b\langle\nabla_U A,V\rangle+\alpha b\langle A,A\rangle+\alpha a\langle A,\xi\rangle\\
		&=-\frac{a}{\alpha-\gamma}\Bigl(A\alpha-\frac{3abc}{4}\Bigr)+b\langle\nabla_U A,V\rangle+\alpha b,
	\end{align*}
	from where we can obtain $\langle\nabla_U A,V\rangle$. This, and analogous calculations with $V\langle A,J\xi\rangle=0$ and $A\langle V,J\xi\rangle=Ab$, give the expressions
	\begin{equation}\label{eq:nablas2}
	\begin{aligned}
	\langle\nabla_U V,A\rangle &{}=\alpha-\frac{a}{b(\alpha-\gamma)}\bigl(A\alpha-\frac{3abc}{4}\bigr),
	&
	\langle\nabla_A U,V\rangle &{}=\gamma-\frac{Ab}{a},\\
	\langle\nabla_V U,A\rangle &{}=-\Bigl(\beta+\frac{b}{a(\beta-\gamma)}\bigl(A\beta+\frac{3abc}{4}\bigr)\Bigr).
	\end{aligned}
	\end{equation}
Equations~\eqref{eq:nablas1} and~\eqref{eq:nablas2} give the formulas for the Levi-Civita connection.

Now, the relations $U\langle V,J\xi\rangle=Ub$, $V\langle V,J\xi\rangle=Vb$, $A\langle A,J\xi\rangle=0$, $a^2+b^2=1$, together with analogous calculations as above and \eqref{eq:nablas1}, yield Equations \eqref{eq:derivatives1}.

Finally, if we apply the Codazzi equation to the triples $(U,V,A)$ and $(U,A,V)$, we obtain:
\begin{align*}
	\langle \nabla_V U, A\rangle &{}=\frac{c+4(\beta-\gamma)\langle \nabla_U V, A \rangle}{4(\alpha-\gamma)},
	&
	\langle \nabla_A U, V\rangle &{}=-\frac{(a^2-2b^2)c-4(\beta-\gamma)\langle \nabla_U V, A\rangle}{4(\alpha-\beta)}.
\end{align*}
Combining this with \eqref{eq:nablas2} we derive~\eqref{eq:derivatives2}.
\end{proof}

\section{Strongly 2-Hopf hypersurfaces}\label{sec:strongly2Hopf}
In this section we investigate the structure of strongly $2$-Hopf hypersurfaces in $\C P^2$ and $\C H^2$. We prove the first part of the Main Theorem in \S\ref{subsec:construction}, and the second part in \S\ref{subsec:strongly2Hopf}.

\subsection{Construction}\label{subsec:construction}

We proceed with the construction of the examples of strongly $2$-Hopf hypersurfaces in a nonflat complex space form $\bar{M}^2(c)$, $c\neq 0$. 

We fix a connected group $H$ of isometries of $\bar{M}^2(c)$ acting polarly and with cohomogeneity two on $\bar{M}^2(c)$. Let $\Sigma$ be a section for this action, and $\Sigma_{reg}$ the set of regular points of $\Sigma$.

Let $\sigma\colon t\in (-\varepsilon, \varepsilon)\mapsto \sigma(t)\in \Sigma_{reg}$ be a unit speed curve, and put $p=\sigma(0)$. Then, the subset
\[
M=H\cdot \sigma = \{h(\sigma(t)): t\in(-\varepsilon, \varepsilon), \, h\in H\}
\]
is a $3$-dimensional hypersurface in $\bar{M}^2(c)$ that is foliated by equidistant principal $H$-orbits, and orthogonally, by the curves $h\circ \sigma\colon t\in (-\varepsilon, \varepsilon)\mapsto (h\circ \sigma)(t)=h(\sigma(t))\in \Sigma_{reg}$, for each $h\in H$. Note that $M=H\cdot\sigma$ is intrinsically a cohomogeneity one manifold. Moreover, the integrable distributions associated with these two foliations are invariant under the shape operator of $M$. Indeed, if $\xi$ is a unit normal vector field on $M$, the principal curvatures (resp.\ principal curvature spaces) of some orbit $H\cdot q$ at $q$ with respect to $\xi$ are also principal curvatures (resp.\ principal curvature spaces) of $M$ at $q$. This follows from the fact that $\xi$ is an $H$-equivariant normal field along principal orbits of a polar action, and therefore $\xi$ is also parallel with respect to the normal connection of the orbits \cite[Corollary~2.3.7]{BCO03}. In particular, the principal curvatures of $M$ along an $H$-orbit are constant. Our purpose is to argue that, generically, $M$ is a strongly $2$-Hopf hypersurface.

Consider the map 
\[
\Phi\colon w\in S^1(T_p\Sigma)\mapsto\langle S_{(\xi_w)_p}(J\xi_w)_p,Jw\rangle\in \R, 
\]
defined in the unit sphere of $T_p\Sigma$, and where $S$ denotes the shape operator of the surface $H\cdot p$, and $\xi_w\in T_p\Sigma$ is the unit vector obtained by rotating $w$ 90 degrees (in some fixed orientation) around the origin of $T_p\Sigma$. Denote by $\g{w}_p$ the subset of vectors of $S^1(T_p \Sigma)$ where $\Phi$ vanishes. Observe that $M=H\cdot \sigma$ is Hopf at $p$ if and only if $\dot{\sigma}(0)\in\g{w}_p$. 

In \cite[end of \S2.2]{DDV}, it was shown that, by virtue of the Ricci equation, the map $\Phi$ cannot vanish identically. Therefore, since $\Phi$ is an analytic map, the set $\g{w}_p$ cannot be infinite. (We note that in~\cite{DDV} it was claimed that $\g{w}_p$ had at most two elements, but this does not need to be true, since $\Phi$ is not linear as asserted there; however, all other statements in~\cite{DDV} remain true.) Thus, if $w=\dot{\sigma}(0)\notin\g{w}_p$, then $M=H\cdot \sigma$ is not Hopf at $p$. By continuity, this implies that, if  $\dot{\sigma}(0)\notin\g{w}_p$ and for $\varepsilon$ small enough, then $M$ is not Hopf at any point. Denote by $\cal{D}$ the rank-$2$ integrable distribution tangent to the $H$-orbits. Then, if $\xi$ is a unit normal vector field to $M$, then $J\xi\in\cal{D}$ (since $\Sigma$ is totally real), and hence, $\cal{D}$ is the smallest distribution of $M$ containing $J\xi$ and invariant under the shape operator of $M$. Moreover, as mentioned above, the principal curvatures of $M$ whose principal curvature spaces lie in $\cal{D}$ are constant along the $H$-orbits, that is, along the integral submanifolds of~$\cal{D}$. This completes the proof that $M$ is strongly $2$-Hopf, whenever $\dot{\sigma}(0)\notin\g{w}_p$. Finally, by the definition of $\g{w}_p$, $H\cdot \sigma$ is Hopf at $p$ if $\dot{\sigma}(0)\in\g{w}_p$.  This concludes the proof of the first part of the Main Theorem.


\subsection{The equations of a strongly $2$-Hopf hypersurface}\label{subsec:strongly2Hopf}

The aim of this subsection is to prove the second part of the Main Theorem, that is, to show that a strongly $2$-Hopf real hypersurface in $\bar{M}^2(c)$, $c\neq 0$, must be locally congruent to a hypersurface constructed as in the previous subsection.

From now on we assume that $M$ is strongly $2$-Hopf with associated distribution $\cal{D}$. We will use the notation given above in Proposition~\ref{prop:algebra}, so in particular $\cal{D}=\mathrm{span}\{U,V\}$. In the following proposition we determine the Levi-Civita connection of $M$.

\begin{proposition}\label{prop:Levi-Civitastrongly}
	The Levi-Civita connection of $M$ in terms of the frame $\{U,V,A\}$ is given by the following equations:
	\begin{align*}
		\nabla_U U  &=-\frac{b(c-4\alpha(\alpha-\beta))}{4a(\alpha-\beta)}A,
		&
		\nabla_V U  &=\frac{c}{4(\alpha-\beta)}A,\\
		\nabla_U V  &=\frac{c}{4(\alpha-\beta)}A,
		&
		 \nabla_V V  &=-\frac{a(c+4\beta(\alpha-\beta))}{4b(\alpha-\beta)}A,\\
		 \nabla_U A  &=\frac{b(c-4\alpha(\alpha-\beta))}{4a(\alpha-\beta)}U
		 -\frac{c}{4(\alpha-\beta)}V,
		 &
		 \nabla_V A  &=-\frac{c}{4(\alpha-\beta)}U
		 +\frac{a(c+4\beta(\alpha-\beta))}{4b(\alpha-\beta)}V\\
		\nabla_A U  &=\left(\frac{c (\beta -\gamma )}{4 (\alpha -\beta )^2}-\frac{c \left(a^2-2 b^2\right)}{4 (\alpha -\beta )}\right)V,	   
		&
		\nabla_A V  &=\left(-\frac{c (\beta -\gamma )}{4 (\alpha -\beta )^2}+\frac{c \left(a^2-2 b^2\right)}{4 (\alpha -\beta )}\right)U,
		\\[1.5ex]
		 \nabla_A A  &=0.
		\end{align*}
		
Furthermore, we have
$\cal{D}a=\cal{D}b=\cal{D}\alpha=\cal{D}\beta=\cal{D}\gamma=0$.	
\end{proposition}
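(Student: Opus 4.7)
The plan is to specialize the generic formulas of Proposition~\ref{prop:Levi-Civita} by exploiting the strongly $2$-Hopf hypotheses (C1)--(C3) in turn. First, condition (C3) translates to $U\alpha = V\alpha = U\beta = V\beta = 0$. Substituting these vanishing derivatives into equations~\eqref{eq:derivatives1} immediately yields $\cal{D}a = \cal{D}b = 0$, and collapses several of the formulas of Proposition~\ref{prop:Levi-Civita}, killing the $U$- and $V$-components of $\nabla_U U$, $\nabla_V V$, $\nabla_U V$ and $\nabla_V U$.

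The key step is to use (C2). Integrability of $\cal{D}$ gives $\langle [U,V], A\rangle = 0$, that is, $\langle \nabla_U V, A\rangle = \langle \nabla_V U, A\rangle$. Combined with the Codazzi relation for the triple $(U,V,A)$ derived in the proof of Proposition~\ref{prop:Levi-Civita}, namely $(\alpha-\gamma)\langle \nabla_V U, A\rangle - (\beta-\gamma)\langle \nabla_U V, A\rangle = c/4$, this forces
\[
\langle \nabla_U V, A\rangle = \langle \nabla_V U, A\rangle = \frac{c}{4(\alpha-\beta)}.
\]
Equating this common value to the explicit expressions of Proposition~\ref{prop:Levi-Civita} solves algebraically for $A\alpha$ and $A\beta$ as rational functions of $\alpha, \beta, \gamma, a, b$ and $c$. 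Substituting these back into the formulas for $\nabla_U U$, $\nabla_V V$, $\nabla_U A$ and $\nabla_V A$ and simplifying gives the first six identities of the proposition; feeding $A\alpha$ into the first line of~\eqref{eq:derivatives2} yields, after algebraic manipulation,
\[
Ab = a\gamma + \frac{a c (a^2-2b^2)}{4(\alpha-\beta)} - \frac{a c (\beta-\gamma)}{4(\alpha-\beta)^2}.
\]

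The main obstacle is to establish $\cal{D}\gamma = 0$, which does not follow directly from (C1)--(C3) but is needed for the final three connection formulas. I plan to prove $U\gamma = 0$ by a commutator argument. The explicit formula for $A\alpha$ obtained above is a smooth function of $\alpha, \beta, \gamma, a, b$ alone, and $U$ annihilates all of these except possibly $\gamma$, so $U(A\alpha) = \partial_\gamma (A\alpha)\,U\gamma$. On the other hand, $[U,A]\alpha = U(A\alpha)$ since $U\alpha = 0$; and since also $V\alpha = 0$, expanding $[U,A]$ in the frame $\{U,V,A\}$ and using $\langle \nabla_U A, A\rangle = 0$ together with $\langle \nabla_A U, A\rangle = U\gamma/(\alpha-\gamma)$ from Proposition~\ref{prop:Levi-Civita} gives $[U,A]\alpha = \langle [U,A], A\rangle\, A\alpha = -\frac{U\gamma}{\alpha-\gamma}\,A\alpha$. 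Matching the two expressions for $U(A\alpha)$ and substituting the explicit value of $A\alpha$ reduces, after cancellation, to $\frac{3abc}{4(\alpha-\gamma)}\,U\gamma = 0$. Since $a$, $b$ and $c$ are all nonzero, $U\gamma = 0$; the last identity of~\eqref{eq:derivatives1} then gives $V\gamma = 0$. With $\cal{D}\gamma = 0$ and the formula for $Ab$ in hand, the three remaining formulas for $\nabla_A U$, $\nabla_A V$ and $\nabla_A A$ follow by direct substitution into Proposition~\ref{prop:Levi-Civita}.
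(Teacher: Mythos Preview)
Your argument is correct and essentially reproduces the paper's proof: the paper also starts from $U\alpha=V\alpha=U\beta=V\beta=0$, uses integrability $\langle\nabla_U V-\nabla_V U,A\rangle=0$ together with Proposition~\ref{prop:Levi-Civita} to solve for $A\alpha$, $A\beta$, $Ab$, and then runs the same commutator identity $([U,A]-UA+AU)\alpha=0$ to force $U\gamma=0$. The only point you have omitted is the degenerate case where $\gamma$ coincides with $\alpha$ or $\beta$ on an open set, so that Proposition~\ref{prop:Levi-Civita} does not apply directly; the paper disposes of this case at the outset by invoking \cite[Proposition~4.1]{DDV}.
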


\begin{proof}

First of all, note that, in case that $\gamma$ equals one of the other two principal curvatures in an open set of $M$, then the relations above hold, according to~\cite[Proposition~4.1]{DDV}. Therefore, it is enough to prove Proposition~\ref{prop:Levi-Civitastrongly} for the case where $M$ has three distinct principal curvatures at every point. In particular, Proposition~\ref{prop:Levi-Civita} holds.	

By definition of strongly $2$-Hopf hypersurface, we have $U\alpha=U\beta=V\alpha=V\beta=0$. Then, Equations~\eqref{eq:derivatives1} imply $Ua=Ub=Va=Vb=0$.

Since the distribution $\cal{D}=\spann\{U,V\}$ is integrable due to the strongly 2-Hopf assumption, we must have $\langle \nabla_U V-\nabla_V U,A\rangle=0$. Using Proposition~\ref{prop:Levi-Civita}, this allows us to obtain after some calculations
\begin{equation}\label{eq:Dintegrable}
	\begin{aligned}
		A\alpha &{}= \frac{\alpha  b (\alpha -\gamma )}{a}+\frac{b c (\alpha -\gamma )}{4 a (\beta -\alpha )}+\frac{3 a b c}{4},\\
		A\beta &{}= -\frac{ \beta a (\beta -\gamma )}{b}-\frac{a c (\beta -\gamma )}{4 b (\alpha -\beta )}-\frac{3 a b c}{4},\\
		A b &{}=a \left(\frac{c \left(a^2-2 b^2\right)}{4 (\alpha -\beta )}-\frac{c (\beta -\gamma )}{4 (\alpha -\beta )^2}+\gamma \right).
	\end{aligned}
\end{equation}

The last step is to show that $U\gamma=0$. Proposition~\ref{prop:Levi-Civita}, Equations~\eqref{eq:Dintegrable}, and the assumption $U\alpha=0$, easily imply
	 \begin{align*}
     [U,A]\alpha&=(\nabla_U A-\nabla_A U)\alpha=-\frac{1}{\alpha-\gamma}\left(\frac{\alpha  b (\alpha -\gamma )}{a}+\frac{b c (\alpha -\gamma )}{4 a (\beta -\alpha )}+\frac{3 a b c}{4}\right)U\gamma,\\
	 UA\alpha&=\frac{b(c-4\alpha(\alpha-\beta))}{4a(\alpha-\beta)}U\gamma, \qquad\qquad
	  AU\alpha=0.
	 \end{align*}
Thus, 
\[
0=([U,A]-UA+AU)\alpha=-\frac{3abc}{4(\alpha-\gamma)}U\gamma,	 
\]
which yields $U\gamma=0$, as desired. Finally, by \eqref{eq:derivatives1}, we get $V\gamma=U\gamma=0$. Putting together all these results we obtain Proposition~\ref{prop:Levi-Civitastrongly}.
\end{proof}


In order to conclude the proof of the Main Theorem we need to extract certain geometric information on the integrable distributions $\cal{D}$ and $\R A$, and then use this information to show that $M$ can be constructed as in the statement of the Main Theorem. The arguments needed for this purpose are completely analogous to those developed in~\cite[Sections~5 and~6]{DDV}. Hence, we will restrict ourselves to give a quick idea of the arguments and state the main partial results involved. We refer to \cite{DDV} for detailed proofs.

A first consequence of Propositions~\ref{prop:algebra} and~\ref{prop:Levi-Civitastrongly} is the following.

\begin{proposition}\label{prop:distribution}
	The leaves of the integrable distribution $\cal{D}$ are flat, totally real surfaces of $\bar{M}^2(c)$ with parallel second fundamental form and flat normal bundle.
\end{proposition}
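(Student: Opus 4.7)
The plan is to fix a leaf $L$ of the distribution $\cal{D}=\spann\{U,V\}$, observe that its normal bundle in $\bar{M}^2(c)$ is framed by $\{A,\xi\}$, and read all four stated properties directly off Propositions~\ref{prop:algebra} and~\ref{prop:Levi-Civitastrongly}. The totally real property is immediate from Proposition~\ref{prop:algebra}: both $JU=-bA-a\xi$ and $JV=aA-b\xi$ lie in $\nu L$, hence $\langle J\cal{D},\cal{D}\rangle=0$.

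For flatness of $L$ and the explicit form of its second fundamental form $\II^L$, the key observation from Proposition~\ref{prop:Levi-Civitastrongly} is that $\nabla_X Y$ has no $\cal{D}$-component for $X,Y\in\{U,V\}$; only the $A$-direction appears. Combining this with the Gauss formula $\bar\nabla_X Y=\nabla_X Y+\langle SX,Y\rangle\xi$ and splitting according to $T\bar{M}^2(c)=TL\oplus\nu L$, I would deduce that $\nabla^L_X Y=0$ on the orthonormal frame $\{U,V\}$; in particular $L$ is flat, and the normal part reads off $\II^L$ explicitly, with coefficients in the frame $\{A,\xi\}$ that are rational expressions in $a,b,\alpha,\beta,c$.

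Flatness of the normal bundle I would establish by exhibiting $\{A,\xi\}$ as a $\nabla^{\perp_L}$-parallel frame: for $X\in\cal{D}$, $\bar\nabla_X\xi=-SX\in\cal{D}$ by $S$-invariance of $\cal{D}$, while $\bar\nabla_X A=\nabla_X A$ (since $\langle SX,A\rangle=0$ because $A\notin\cal{D}$), and the formulas in Proposition~\ref{prop:Levi-Civitastrongly} show that $\nabla_X A\in\cal{D}$ for $X\in\{U,V\}$. Thus both $A$ and $\xi$ are $\nabla^{\perp_L}$-parallel. For parallelism of $\II^L$, since $\nabla^L=0$ on $\{U,V\}$, one has $(\nabla^L_X\II^L)(Y,Z)=\nabla^{\perp_L}_X(\II^L(Y,Z))$ for $X,Y,Z\in\{U,V\}$; the $\{A,\xi\}$-coefficients of $\II^L(Y,Z)$ depend only on $a,b,\alpha,\beta,c$, which by the last assertion of Proposition~\ref{prop:Levi-Civitastrongly} are $\cal{D}$-constant, so the derivative vanishes.

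There is essentially no analytic obstacle: the proposition is a repackaging of Propositions~\ref{prop:algebra} and~\ref{prop:Levi-Civitastrongly}. The only conceptual step worth emphasizing is that the decomposition of $\bar\nabla_X Y$, for $X,Y\in\cal{D}$, into its $\cal{D}$-component versus its $\{A,\xi\}$-component matches exactly the $TL$ versus $\nu L$ split, so that the intrinsic Levi-Civita connection, the second fundamental form, and the normal connection of $L$ are all read off for free, and the $\cal{D}$-constancy of $a,b,\alpha,\beta$ then delivers simultaneously the parallelism of $\II^L$ and the flatness of $\nabla^{\perp_L}$.
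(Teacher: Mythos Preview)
Your proposal is correct and matches the paper's approach exactly: the paper does not spell out a proof but simply presents Proposition~\ref{prop:distribution} as ``a first consequence of Propositions~\ref{prop:algebra} and~\ref{prop:Levi-Civitastrongly}'', which is precisely what you have unpacked. Each of your four verifications (totally real via $JU,JV\in\nu L$; intrinsic flatness via $\nabla^L\equiv 0$ on $\{U,V\}$; $\nabla^{\perp_L}$-parallelism of the frame $\{A,\xi\}$; and parallelism of $\II^L$ via the $\cal{D}$-constancy of $a,b,\alpha,\beta$) is accurate and is exactly the content one extracts from those two propositions.
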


Observe that the relation $\nabla_A A=0$ in Proposition~\ref{prop:Levi-Civitastrongly} implies that the integral curves of $A$ are geodesics of $M$ and, by the Gauss formula, their curvature as curves in $\bar{M}^2(c)$ is~$\gamma$. Moreover, these curves are, locally, intersections of $M$ with totally geodesic, totally real surfaces in $\bar{M}^2(c)$. More precisely, we have:

\begin{proposition}\label{prop:sigma}
	Let $\sigma$ be an integral curve of $A$ through a point $p\in M$. Let $Q_p=\exp_p(\R A_p\oplus\R\xi_p)$, where $\exp_p$ denotes the Riemannian exponential map of $\bar{M}^2(c)$ at $p$. Then $Q_p$ is a totally real, totally geodesic surface of $\bar{M}^2(c)$, and $\sigma$ is contained in $Q_p$.
	
	Furthermore, the curve $\sigma$ is determined by the initial conditions $\sigma(0)=p$, $\dot\sigma(0)=A_p$, and the fact that $\sigma$ is a unit speed curve in $Q_p=\exp_p(\R A_p\oplus\R\xi_p)$ with curvature $\gamma$ with respect to $\xi$.
\end{proposition}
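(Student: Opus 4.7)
The plan is to split the argument into three steps. For the first, I would show that $Q_p$ is totally real and totally geodesic. Proposition~\ref{prop:algebra} yields $J\xi_p=aU_p+bV_p$ and $JA_p=bU_p-aV_p$, both lying in $\cal{D}_p=\spann\{U_p,V_p\}$, which is orthogonal to $V:=\R A_p\oplus\R\xi_p$. Hence $V$ is a totally real $2$-plane, so all $J$-terms in the formula for $\bar R$ vanish when the arguments lie in $V$, leaving $\bar R(X,Y)Z=(c/4)(\langle Y,Z\rangle X-\langle X,Z\rangle Y)\in V$ for all $X,Y,Z\in V$. Thus $V$ is a Lie triple system and $Q_p=\exp_p(V)$ is totally geodesic in $\bar M^2(c)$; since $J$ is parallel and the tangent spaces of $Q_p$ are parallel translates of $V$ along its geodesics, $Q_p$ is totally real throughout.

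For the second step, I combine $\nabla_A A=0$ from Proposition~\ref{prop:Levi-Civitastrongly} with the Gauss formula to get $\bar\nabla_A A=\II(A,A)=\gamma\,\xi$, and apply Weingarten to get $\bar\nabla_A\xi=-SA=-\gamma\,A$. Hence along $\sigma$ the $\bar\nabla$-covariant derivatives of the orthonormal pair $\{A,\xi\}$ remain in $\spann\{A,\xi\}$, and $\sigma$ satisfies the Frenet-type system
\[
\bar\nabla_{\dot\sigma}\dot\sigma=\gamma\,\xi,\qquad \bar\nabla_{\dot\sigma}\xi=-\gamma\,\dot\sigma,
\]
with initial data $\sigma(0)=p$, $\dot\sigma(0)=A_p$, $\xi(0)=\xi_p$.

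For the third step, I regard $\gamma(t):=\gamma(\sigma(t))$ as a prescribed function of the arc length parameter. Inside $Q_p$, I solve the intrinsic Frenet equation for a unit speed curve $\tilde\sigma$ with $\tilde\sigma(0)=p$, $\dot{\tilde\sigma}(0)=A_p$, and signed geodesic curvature $\gamma(t)$ taken with respect to the intrinsic unit normal in $TQ_p$ that agrees with $\xi_p$ at $t=0$. Because $Q_p$ is totally geodesic, its intrinsic Levi-Civita connection is the restriction of $\bar\nabla$ to $TQ_p$, so $\tilde\sigma$ satisfies exactly the same ambient ODE as $\sigma$ with the same initial data. Uniqueness then forces $\sigma=\tilde\sigma\subset Q_p$ and simultaneously yields the characterization in the \emph{furthermore} clause. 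The only subtle point I foresee is the bookkeeping around $\gamma$: it is a priori a principal curvature function on $M$, but along $\sigma$ it must be treated as a prescribed scalar function of the arc length parameter that can be fed as data into the intrinsic Frenet equation in $Q_p$, with signs chosen so that the intrinsic unit normal in $TQ_p$ at $t=0$ coincides with $\xi_p$.
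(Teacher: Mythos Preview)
Your proof is correct and essentially matches what the paper has in mind. The paper itself does not give a full proof of this proposition, but merely records the key preliminary observation (that $\nabla_A A=0$ forces integral curves of $A$ to be geodesics in $M$ with ambient curvature $\gamma$) and refers to~\cite{DDV} for the details; your three steps---the Lie triple system argument for $Q_p$, the Frenet system along $\sigma$ via Gauss and Weingarten, and the uniqueness comparison with a curve of prescribed curvature in $Q_p$---are precisely the standard way to fill in those details and align with the paper's sketch.
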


Next, one can show that, if $Q_p$ and $\sigma$ are as above, then $Q_p$ intersects the integral submanifolds of $\mathcal{D}$ perpendicularly along $\sigma$. This, Proposition~\ref{prop:sigma}, the fact that the integral curves of $A$ are geodesics in $M$, and the fact that the curvature $\gamma$ is constant along the leaves of $\cal{D}$, allows to show the following result.

\begin{proposition}\label{prop:equidistant}
	We have:
	\begin{enumerate}[\rm (i)]
		\item The integral surfaces of $\mathcal{D}$ are equidistant submanifolds of $\bar{M}^2(c)$.
		
		\item Let $L$ be an integral surface of the distribution $\mathcal{D}$, and let $L_t$ be an integral surface of $\mathcal{D}$ whose distance to $L$ is a sufficiently small number $t$. Then, in a neighborhood $\mathcal{U}$ of a point in $L$ there exists a parallel normal vector field $\eta_t$ such that
		\[
		L_t=\{\exp_p(\eta_t(p)):p\in \mathcal{U}\}.
		\]
	\end{enumerate}
\end{proposition}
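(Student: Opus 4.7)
My plan is to prove (ii) constructively and obtain (i) as a consequence, exploiting the general principle that a parallel normal vector field yields equidistant parallel submanifolds. I would split the argument into three stages: first show that the flow $\phi_t$ of $A$ permutes the leaves of $\mathcal{D}$; next, exhibit a parallel orthonormal frame for the normal bundle of a leaf $L$ in $\bar{M}^2(c)$; and finally show that, in that frame, the normal vector taking $p$ to $\sigma_p(t)$ has coefficients that are independent of $p\in L$.

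Stage one is immediate from Proposition~\ref{prop:Levi-Civitastrongly}, since $\nabla_A U,\nabla_A V,\nabla_U A,\nabla_V A$ all lie in $\mathcal{D}=\spann\{U,V\}$, whence $[A,U],[A,V]\in\mathcal{D}$ and the flow of $A$ sends leaves of $\mathcal{D}$ to leaves of $\mathcal{D}$; writing $L_t$ for the leaf through $\sigma_p(t)=\phi_t(p)$, the map $\phi_t\colon L\to L_t$ is a local diffeomorphism. For stage two, fix a leaf $L$ and $p\in L$; by Proposition~\ref{prop:sigma} we have $\nu_p L=\spann\{A_p,\xi_p\}=T_pQ_p$. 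I would then verify that $\{A,\xi\}$ is a parallel frame of $\nu L$ directly from the Gauss--Weingarten formulas. For $X\in TL=\mathcal{D}$ we have $\bar{\nabla}_X\xi=-SX\in\mathcal{D}$, since $\mathcal{D}$ is $S$-invariant; similarly $\bar{\nabla}_X A=\nabla_X A+\langle SX,A\rangle\xi=\nabla_X A$, because $SX\in\mathcal{D}$ is orthogonal to $A$, and this lies in $\mathcal{D}=TL$ by the formulas of Proposition~\ref{prop:Levi-Civitastrongly} for $\nabla_U A$ and $\nabla_V A$. In both cases the $\nu L$-component vanishes, so $A$ and $\xi$ are parallel normal sections of $L$.

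For stage three, Proposition~\ref{prop:sigma} characterizes $\sigma_p$ as the unique unit speed curve in the totally geodesic space form $Q_p$ with initial data $(p,A_p)$ and geodesic curvature $\gamma$ (with respect to $\xi$). Because $\mathcal{D}\gamma=0$ and $\phi_t(L)=L_t$ with $\gamma\rvert_{L_t}$ constant, the function $s\mapsto \gamma(\sigma_p(s))$ is independent of $p\in L$. Since each $Q_p$ is isometric to the same 2-dimensional space form of curvature $c/4$ via an isometry matching $(A_p,\xi_p)$ with $(A_{p'},\xi_{p'})$, the coordinates of $\eta_t(p):=\exp_p^{-1}(\sigma_p(t))\in T_pQ_p=\nu_pL$ in the basis $\{A_p,\xi_p\}$ are determined by one and the same ODE (expressing constant-speed motion with prescribed curvature $\gamma(s)$) with the same initial data for every $p\in L$; hence $\eta_t=f(t)A+g(t)\xi$ for scalar functions $f,g$ of $t$ alone, and this section is parallel in $\nu L$ because $\{A,\xi\}$ is. This yields $L_t=\{\exp_p(\eta_t(p)):p\in\mathcal{U}\}$, proving (ii). Part (i) then follows: as a parallel normal field, $\eta_t$ has constant length, and the geodesic from $p$ in direction $\eta_t(p)$ meets $L_t$ orthogonally at $\exp_p(\eta_t(p))$ (again by parallelism of $\eta_t$ together with Gauss's lemma), so for $t$ sufficiently small, $d(p,L_t)=|\eta_t|$ is independent of $p$. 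The main technical step is stage two, verifying the parallel property of $\{A,\xi\}$ in $\nu L$; this is really an algebraic consequence of Proposition~\ref{prop:Levi-Civitastrongly} combined with the $S$-invariance of $\mathcal{D}$, but it is the identity that makes the rest of the argument work.
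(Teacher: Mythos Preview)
Your proof is correct and follows essentially the same strategy the paper sketches (with details deferred to~\cite{DDV}): both arguments rest on Proposition~\ref{prop:sigma}, the constancy of $\gamma$ along the leaves of $\mathcal{D}$, and the fact that the totally geodesic surface $Q_p=\exp_p(\R A_p\oplus\R\xi_p)$ meets every leaf of $\mathcal{D}$ orthogonally. Your organizational choice is slightly different---you make the parallelism of the normal frame $\{A,\xi\}$ along $L$ the central algebraic lemma, whereas the paper phrases the same content geometrically as ``$Q_p$ intersects the integral submanifolds of $\mathcal{D}$ perpendicularly along $\sigma$''---but these are two sides of the same computation coming from Proposition~\ref{prop:Levi-Civitastrongly}. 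Your Stage~1 (that the flow of $A$ permutes the leaves because $[A,\mathcal{D}]\subset\mathcal{D}$) is a clean replacement for the paper's remark that the integral curves of $A$ are geodesics in $M$; it serves the same purpose of ensuring that $\sigma_p(s)$ lands in a leaf $L_s$ independent of $p$. One cosmetic point: in part~(i) the orthogonality of the geodesic $t\mapsto\exp_p(t\,\eta_t(p)/|\eta_t|)$ to $L_t$ is not really Gauss's lemma but rather the fact that this geodesic lies in the totally geodesic $Q_p$, whose tangent space at $\sigma_p(t)$ is $\nu_{\sigma_p(t)}L_t$; also note that your parameter $t$ is flow-time along $A$, while the proposition's $t$ is distance, so a harmless reparametrization is implicit.
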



Now, it follows directly from Proposition~\ref{prop:distribution} that the integral submanifolds of $\cal{D}$ are flat, Lagrangian surfaces of $\bar{M}^2(c)$ with parallel mean curvature. Then, \cite[Theorem~2.1]{DDV} guarantees that each one of these surfaces is an open part of a principal orbit of a polar action of cohomogeneity two on $\bar{M}^2(c)$. By Proposition~\ref{prop:equidistant}, the integral surfaces of $\cal{D}$ are obtained by exponentiating a parallel normal vector field along a fixed leaf. Moreover, on a principal orbit of a polar action every parallel normal field is equivariant. Altogether, this implies that all leaves of $\cal{D}$ are principal orbits of the same polar action of a group~$H$. Moreover, for each $p\in M$ the integral curve of $A$ through $p$ is contained in the totally geodesic submanifold $Q_p=\exp_p(\R A_p\oplus\R\xi_p)$, which is perpendicular to the leaf of $\cal{D}$ through $p$ and, then, must be a section for the $H$-action. Therefore, $M$ is obtained, locally, as $H\cdot \sigma$, where $\sigma$ is an integral curve of $A$. This concludes the proof of the Main Theorem.

\section{Austere hypersurfaces}\label{sec:austere}
In this section we investigate austere real hypersurfaces in $\bar{M}^2(c)$, $c\neq 0$, under the only assumption that the Hopf vector field does not have nontrivial projection onto three principal curvature spaces. In other words, we just assume $h\leq 2$. We prove first that $h$ must be constantly equal to $2$ in an open dense subset.

\begin{proposition}\label{prop:austereHopf}
	There are no Hopf austere hypersurfaces in $\bar{M}^2(c)$, $c\neq 0$.
\end{proposition}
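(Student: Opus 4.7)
The plan is to use the fact that austereness is a considerably stronger condition than the minimality it implies, and then to invoke Theorem~\ref{th:Hopf} to reduce to a short finite list of examples. Since austereness forces $\tr S = \alpha+\beta+\gamma = 0$, a Hopf austere hypersurface is in particular Hopf with constant mean curvature zero. Theorem~\ref{th:Hopf} then implies that $M$ is, locally on connected components, an open part of a homogeneous Hopf hypersurface in $\bar{M}^2(c)$; these are classified by Takagi for $c>0$ and by Montiel for $c<0$ (see~\cite{NR}). The task thus reduces to showing that no example on either list is austere.

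To narrow the search, I would first extract the algebraic shape of the triple $(\alpha,\beta,\gamma)$ of principal curvatures, with $\alpha$ the Hopf one. Combining $\alpha+\beta+\gamma=0$ with the identity $2\alpha(\beta+\gamma)-4\beta\gamma+c=0$ from \cite[Corollary~2.3(ii)]{NR} yields $\beta\gamma = (c-2\alpha^2)/4$. The invariance of the multiset $\{\alpha,\beta,\gamma\}$ under $x\mapsto -x$, together with the oddness of its cardinality and the fact that $\alpha=\beta=\gamma=0$ would force $c=0$, requires exactly one of its entries to vanish. A short case split then leaves only:
\begin{enumerate}[\rm (A)]
\item $\alpha=0$ and $\beta=-\gamma=\sqrt{-c}/2$, requiring $c<0$; or
\item $\alpha\neq 0$, $\{\beta,\gamma\}=\{0,-\alpha\}$, and $\alpha^2=c/2$, requiring $c>0$.
\end{enumerate}

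The final step is to inspect each family in the classification. In $\C P^2$ only case~(B) can arise: for a geodesic sphere the non-Hopf principal curvature has multiplicity two, so austereness would force it to be zero, leading to a radius at which the Hopf principal curvature is undefined; for a tube around a complex quadric a direct check with the standard formulas for the three principal curvatures shows that the triple never realizes the pattern $\{0,\pm\sqrt{c/2}\}$. In $\C H^2$ only case~(A) can arise, and for each of the four families on Montiel's list (geodesic spheres, tubes around a totally geodesic $\C H^1$, tubes around a totally geodesic $\R H^2$, and horospheres) the Hopf principal curvature, read off from the standard hyperbolic-trigonometric expressions or from the constant value $\sqrt{-c}$ for the horosphere, is nowhere zero on the admissible parameter range, which rules out~(A).

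The main obstacle is really bookkeeping: one has to be systematic in checking the explicit principal curvature formulas and their multiplicities for each Takagi/Montiel family. There is no deep technical difficulty once Theorem~\ref{th:Hopf} is in hand, since the classification itself already does the heavy lifting.
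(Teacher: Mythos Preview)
Your proposal is correct and follows essentially the same strategy as the paper: reduce via Theorem~\ref{th:Hopf} to the Takagi and Montiel lists, then verify that no example on either list is austere. The paper's proof is terser---it simply notes that the only minimal homogeneous Hopf hypersurfaces in $\bar{M}^2(c)$ are geodesic spheres or tubes around a totally geodesic $\R P^2$ of specific radii, and that none of these is austere---whereas you insert the intermediate algebraic narrowing (your cases~(A) and~(B)) to streamline the final inspection; this is a helpful refinement but not a genuinely different route.
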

\begin{proof}
Austere hypersurfaces have, by definition, vanishing mean curvature. Then, by Theorem~\ref{th:Hopf}, a Hopf austere hypersurface in $\bar{M}^2(c)$, $c\neq 0$, must be an open part of a homogeneous Hopf hypersurface. But by direct inspection of the principal curvatures of the examples in Takagi's and Montiel's lists~\cite[\S3]{NR} one can check that the only Hopf, homogeneous, minimal hypersurfaces in $\bar{M}^2(c)$, $c\neq 0$, are geodesic spheres or tubes around a totally geodesic $\R P^2$ of certain fixed radius. But none of these examples is austere.
\end{proof}

Hence, if $M$ is an austere hypersurface of $\bar{M}^2(c)$, $c\neq 0$, with $h\leq 2$, then there is an open and dense subset of $M$ where $h=2$. In what follows we will assume that calculations take place in this subset.  Note that the assumption that $M$ is austere implies that its principal curvatures are $\alpha$, $-\alpha$ and $0$, for some smooth function $\alpha$ on $M$. We will use the notation established in Proposition~\ref{prop:algebra}. 

\begin{proposition}\label{prop:austereh2}
	Let $M$ be an austere hypersurface of $\bar{M}^2(c)$, $c\neq 0$, with $h=2$, and three distinct principal curvatures $\alpha$, $-\alpha$ and $0$. Then $M$ is strongly 2-Hopf, the Hopf vector field has nontrivial projections onto $T_\alpha$ and $T_{-\alpha}$, and the norm of both projections is $a=b=1/\sqrt{2}$.
\end{proposition}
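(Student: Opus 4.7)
The proof splits into two cases according to where the zero principal curvature sits in the Hopf decomposition of Proposition~\ref{prop:algebra}. Since $M$ is austere with three distinct principal curvatures, the multiset $\{\alpha,\beta,\gamma\}$ must coincide with $\{\alpha_0,-\alpha_0,0\}$ for some nowhere-vanishing smooth function $\alpha_0$ on $M$. Either $\gamma=0$ (Case~A, with $\{\alpha,\beta\}=\{\alpha_0,-\alpha_0\}$), or $\gamma=\pm\alpha_0$ and one of $\alpha,\beta$ vanishes identically (Case~B). The plan is to rule out Case~B and then, in Case~A, to pin down $a=b=1/\sqrt{2}$ together with the integrability of $\cal{D}$.

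In Case~B I would assume, without loss of generality, $\alpha\equiv 0$, $\beta=-\alpha_0$, $\gamma=\alpha_0$. Since $\alpha$ vanishes, \eqref{eq:derivatives1} in Proposition~\ref{prop:Levi-Civita} collapses to $Ua=Ub=Va=Vb=0$, and substituting $A\alpha=0$ together with $A\beta=-A\alpha_0$ into \eqref{eq:derivatives2} produces explicit expressions for $Ab$ and $A\alpha_0$ in terms of $a,b,\alpha_0,c$. The universal identity $UAa-AUa=[U,A]a$, combined with the Levi-Civita formulas and $Ua=0$, reduces to a scalar equation whose only nontrivial factor is $U\alpha_0$, forcing $U\alpha_0=0$; the last relation in \eqref{eq:derivatives1} then gives $V\alpha_0=0$. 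Consequently $0=[U,V]\alpha_0=\langle[U,V],A\rangle\,A\alpha_0$, so at each point one of these two functions vanishes. If $A\alpha_0=0$, the global constancy of $\alpha_0$ forces $Aa=0$; comparing with $Aa=-bAb/a$ and the explicit formula for $Ab$ yields the impossible identity $15a^2+9=0$. If instead $\langle[U,V],A\rangle=0$, the explicit value of this bracket gives $8\alpha_0^2=-c(9a^2+1)$, and differentiating along $A$ and matching with the formulas for $Ab$ and $A\alpha_0$ leads to a quartic in $a^2$ with a single positive root; so $a$ is a universal constant, which forces $Aa=0$ and contradicts the nonzero value of $Aa$ already deduced. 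Hence Case~B cannot occur.

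In Case~A, writing $\beta=-\alpha$, the relation $A\beta=-A\alpha$ applied to the \eqref{eq:derivatives2} formula for $A\beta$ yields, after some simplification using $a^2+b^2=1$, the clean expression
\[
A\alpha=\tfrac{1}{2}ab(4\alpha^2+c),
\]
and substituting it into Proposition~\ref{prop:Levi-Civita} produces
\[
\langle[U,V],A\rangle=\frac{(a^2-b^2)(c-8\alpha^2)}{4\alpha},\qquad Ab=\frac{a(a^2-b^2)(c-2\alpha^2)}{4\alpha}.
\]
A commutator-identity argument parallel to the one in Case~B, now via $UAa-AUa=[U,A]a$ with the refined connection coefficients, forces $U\alpha=V\alpha=0$, the strongly 2-Hopf constancy condition; \eqref{eq:derivatives1} then gives $Ua=Ub=Va=Vb=0$. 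Expanding $[U,V]a=UVa-VUa=0$ in the frame $\{U,V,A\}$ collapses to $\langle[U,V],A\rangle\,Aa=0$, and using $Aa=-bAb/a$ with the explicit $Ab$ above yields
\[
\frac{b(a^2-b^2)^2(c-8\alpha^2)(c-2\alpha^2)}{16\alpha^2}=0.
\]
The borderline sub-cases $4\alpha^2+c=0$ (which makes $\alpha$ constant) and $c=8\alpha^2$ are disposed of by short separate arguments that again return $a^2=b^2$. Thus $a=b=1/\sqrt{2}$, the bracket $\langle[U,V],A\rangle$ vanishes, $\cal{D}$ is integrable, and $M$ is strongly 2-Hopf.

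The main obstacle is the elimination of Case~B: although each step is mechanical, organizing the cascade of \eqref{eq:derivatives1}, \eqref{eq:derivatives2}, Levi-Civita and commutator identities, and handling the dichotomy $A\alpha_0=0$ versus $\langle[U,V],A\rangle=0$, requires care. Case~A is considerably shorter thanks to the clean formula for $A\alpha$ forced by the symmetric spectrum $\beta=-\alpha$.
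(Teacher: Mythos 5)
Your case division coincides with the paper's, and the Case~A formulas you write down are correct: with $\beta=-\alpha$, $\gamma=0$ one does get $A\alpha=\tfrac12 ab(4\alpha^2+c)$, and your expressions for $\langle[U,V],A\rangle$ and $Ab$ check out against Proposition~\ref{prop:Levi-Civita} and \eqref{eq:derivatives2}. The gap is in the engine of the argument. The paper's proof runs on the Gauss equation, which injects genuinely new algebraic relations because the ambient curvature is known explicitly; you try to run everything on tautological commutator identities $XYf-YXf=[X,Y]f$, and the specific identity you invoke in Case~A, $UAa-AUa=[U,A]a$, cannot deliver what you claim. Since $Ua=bV\alpha/2\alpha$ by \eqref{eq:derivatives1}, the term $AUa$ contains the second derivative $A(V\alpha)$, which nothing in your setup controls; the identity is therefore one equation in the three unknowns $U\alpha$, $V\alpha$, $A(V\alpha)$ and cannot force $U\alpha=V\alpha=0$. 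In the paper the logical order is reversed: $a=b=1/\sqrt2$ is obtained \emph{first}, by applying the Gauss equation to $(A,V,A,U)$ --- once \eqref{eq:austere5} is in place every ingredient of that curvature component is an explicit function of $a,b,\alpha$, so one gets the derivative-free relation $abc(a^2-b^2)(c+4\alpha^2)=0$ --- and only then does \eqref{eq:derivatives1} yield $U\alpha=V\alpha=0$ and the integrability of $\cal{D}$.

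Case~B has a second, independent error. From $\alpha\equiv 0$ and \eqref{eq:derivatives1} you may conclude $Ua=Ub=0$ (these involve $V\alpha=0$), but $Va$ and $Vb$ are proportional to $U\beta=-U\alpha_0$, which is exactly what you have not yet shown to vanish, so the ``collapse'' to $Ua=Ub=Va=Vb=0$ is circular; and the later assertions (that constancy of $\alpha_0$ forces $Aa=0$, the identity $15a^2+9=0$, the quartic in $a^2$) are stated without computation and inherit this error. The paper instead applies the Gauss equation to $(U,V,U,A)$ to obtain $U\alpha=V\alpha=Ua=Ub=0$, and then to $(U,V,U,V)$ and $(A,V,U,A)$ to produce two incompatible closed-form expressions for $\alpha^2$ in terms of $b^2$ and $c$. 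If you import the Gauss equation at these two points, your Case~A endgame (deducing $\langle[U,V],A\rangle\,Aa=0$ and factoring) does work; indeed the sub-cases $c=8\alpha^2$ and $c=2\alpha^2$ it produces force $c>0$ and $\alpha$ constant, contradicting $A\alpha=\tfrac12 ab(4\alpha^2+c)\neq 0$, which would even let you bypass the appeal to the classification of constant-principal-curvature hypersurfaces that the paper needs for its borderline sub-case $c+4\alpha^2=0$.
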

\begin{proof}
Assume first that $J\xi$ has nontrivial projection onto $T_\alpha$ and $T_{0}$. Thus, we put $\beta=0$ and $\gamma=-\alpha$ in the results of Section~\ref{sec:LeviCivita}. In particular, by \eqref{eq:derivatives1} and \eqref{eq:derivatives2} we have
\begin{align}\label{eq:austere1}
A\alpha&=\frac{b}{4a}(5c+8\alpha^2+9b^2c),& Ab&=\frac{a}{4\alpha}(5c-4\alpha^2-3ca^2), &Va&=Vb=0,
\\ \label{eq:austere1.5}
V\alpha&=-\frac{a}{2b}U\alpha,& Ua&= -\frac{a}{2\alpha} U\alpha,& Ub&=\frac{a^2}{2b\alpha}U\alpha.
\end{align}
Using Proposition~\ref{prop:algebra}, the formulas for the Levi-Civita connection in~Proposition~\ref{prop:Levi-Civita}, \eqref{eq:austere1} and \eqref{eq:austere1.5}, the Gauss equation applied to $(U,V,U,A)$ implies, after some calculations, that
\begin{equation}\label{eq:austere2}
U\alpha=V\alpha=Ua=Ub=0.
\end{equation}
Using again Propositions~\ref{prop:algebra} and~\ref{prop:Levi-Civita}, \eqref{eq:austere1} and \eqref{eq:austere2}, the Gauss equation applied to $(U,V,U,V)$ yields $\alpha^2=\frac{1}{4}(2+3b^2)c$. Similarly, by the Gauss equation applied to $(A,V,U,A)$ we obtain that $\alpha^2 =\frac{(-8+9b^2+27b^4)c}{4(-7+3b^2)}$. But both expressions for $\alpha^2$ are incompatible for $b\in\R$. This contradiction implies the nonexistence of austere hypersurfaces whose Hopf vector field has nontrivial projections onto $T_\alpha$ and $T_{0}$. 	

Since $\alpha$ and $-\alpha$ are interchangeable, we just have to deal with the case where $J\xi$ has nontrivial projection onto $T_\alpha$ and $T_{-\alpha}$. Thus, we put $\beta=-\alpha$ and $\gamma=0$ in the results of Section~\ref{sec:LeviCivita}. Then, by~\eqref{eq:derivatives2} we get 
\begin{equation}\label{eq:austere5}
A\alpha=\frac{ab}{2}(c+4\alpha^2).
\end{equation}
Hence, by applying the Gauss equation to $(A,V,A,U)$, and using Propositions~\ref{prop:algebra} and~\ref{prop:Levi-Civita} with $\beta=-\alpha$, $\gamma=0$, and \eqref{eq:austere5}, we obtain $abc(a^2-b^2)(c+4\alpha^2)=0$. If $a\neq b$ on a nonempty subset $\cal{U}$ of $M$, we deduce that $\cal{U}$ is a real hypersurface with constant principal curvatures $\pm\sqrt{-c}/2$ and $0$ in $\bar{M}^2(c)$, $c<0$. By the classification in~\cite{BD:pams}, $\cal{U}$ must be an open part of a Lohnherr hypersurface, but this example satisfies $a=b$ everywhere, which is a contradiction. Therefore we must have $a=b$ on $M$. Since $a^2+b^2=1$, we deduce that $a=b=1/\sqrt{2}$. But then \eqref{eq:derivatives1} yields $U\alpha=V\alpha=0$. This, together with \eqref{eq:austere5} and Proposition~\ref{prop:Levi-Civita}, implies that $\nabla_U V-\nabla_V U=0$. Hence, $M$ is strongly 2-Hopf, as we wanted to show.
\end{proof} 

In order to conclude the proof of Theorem~\ref{th:austere} we will make use of the notion of ruled hypersurface. Recall that a real hypersurface $M$ in a complex space form is called ruled if the maximal complex distribution $(J\xi)^\perp$ of $M$ is integrable and its integral submanifolds are totally geodesic complex hypersurfaces of the ambient space~\cite[\S8.5.1]{CR}. 

\begin{proof}[Proof of Theorem~\ref{th:austere}]
Observe that $(J\xi)^\perp=\R JA\oplus\R A=\R(bU-aV)\oplus\R A$. By Proposition~\ref{prop:austereh2} we have $SJA=\alpha b U+\alpha a V=(\alpha/\sqrt{2})(U+V)=\alpha J\xi$ and $SA=0$, which implies that $S(J\xi)^\perp\subset \R J\xi$. By \cite[Proposition~8.27]{CR}, $M$ is a ruled hypersurface. In particular, $M$ is a minimal ruled hypersurface in $\bar{M}^2(c)$, $c\neq 0$. Lohnherr and Reckziegel~\cite{LR} proved that there is at most one minimal ruled hypersurface in $\C P^2$ up to local congruence, and at most three in $\C H^2$.

Kimura~\cite{Ki:mathann} proved that a cone over a Clifford torus in $\C P^2$ is austere and ruled. Since ruled hypersurfaces satisfy $h\leq 2$ everywhere (indeed $h=2$ on an open and dense subset), Kimura's example gives the only possibility of an austere hypersurface with $h\leq 2$ in $\C P^2$. 

In $\C H^2$ there are three known (noncongruent) examples of minimal ruled hypersurfaces: Clifford cones~\cite[\S3]{ALS}, Lohnherr hypersurfaces~\cite[\S4]{LR}, and bisectors~\cite[p.~447]{CR}; see also~\cite[p.~253]{Goldman}. All of them are known to be austere with $h\leq 2$. Therefore, these are precisely the examples of austere hypersurfaces with $h\leq 2$ in $\C H^2$.
\end{proof}

\begin{remark}
It is known that a ruled hypersurface $M$ in a complex space form is locally constructed by attaching to an integral curve $\tau$ of $J\xi$ the complex totally geodesic hypersurfaces that are normal to $\dot{\tau}$. It was also shown in \cite{LR} that a ruled hypersurface in $\bar{M}^2(c)$, $c\neq 0$, is minimal if and only if $\tau$ is a circle contained in a totally geodesic and totally real submanifold of $\bar{M}^2(c)$.  Moreover, in the projective case, any two such circles give rise to the same ruled hypersurface, up to congruence, whereas in the hyperbolic case, two circles $\tau_1$, $\tau_2$ give rise to congruent ruled hypersurfaces if and only if their curvatures $\left\|\bar{\nabla}_{\dot{\tau_1}}\dot{\tau_1}\right\|$, $\left\|\bar{\nabla}_{\dot{\tau_2}}\dot{\tau_2}\right\|$ are both greater, equal, or less than $\sqrt{-c}/2$. It follows from our study above that $\bar{\nabla}_{J\xi}J\xi=\alpha A$ for an austere hypersurface with $h=2$. Note that from~\eqref{eq:austere5} we have that $\alpha-\sqrt{-c}/2$ has constant sign. One can show that the cases $\alpha>\sqrt{-c}/2$, $\alpha=\sqrt{-c}/2$ and $\alpha<\sqrt{-c}/2$ correspond, respectively, to Clifford cones, Lohnherr hypersurfaces and bisectors.
\end{remark}

We conclude this section by observing that we could have finished the proof of Theorem~\ref{th:austere} without using the results about ruled hypersurfaces. We sketch briefly the idea of this alternative argument. 

In view of Propositions~\ref{prop:austereHopf} and~\ref{prop:austereh2}, an austere hypersurface $M$ in $\bar{M}^2(c)$, $c\neq 0$, satisfying $h\leq 2$, is strongly $2$-Hopf in an open and dense subset, and thus, must be constructed by the procedure described in Subsection~\ref{subsec:construction}. Moreover, according to Proposition~\ref{prop:sigma}, the curve $\sigma$ inside the section $\Sigma$ of a polar $H$-action must have curvature $\gamma=0$. In other words, we need $\sigma$ to be a pregeodesic in $\Sigma$, that is, $\bar{\nabla}_{\dot{\sigma}}\dot{\sigma}\in \mathrm{span}\{\dot{\sigma}\}$.

Moreover, if $M=H\cdot\sigma$ is to be austere, the trace of the shape operator $S_\xi$ of the orbits $H\cdot \sigma(t)$, with respect to the normal vector field $\xi$ of $M$, must vanish. This follows from the fact that the principal curvatures of the integral leaves of $\cal{D}=\textrm{span}\{U,V\}$ with respect to $\xi$ coincide with the spectrum of the shape operator of $M$ restricted to $\cal{D}$, but this spectrum is $\{\alpha,-\alpha\}$, for some $H$-invariant function $\alpha$ on $M$, according to Proposition~\ref{prop:austereh2}. Thus, $\xi_p$ must be perpendicular to the mean curvature vector field of the orbit $H\cdot p$, for every $p\in M$. By $H$-equivariance, it is enough to have this property along the points of $\sigma$. Let $\cal{H}$ be the vector field on $\Sigma$ defined by the fact that $\cal{H}_p$ is the mean curvature vector of $H\cdot p$ at $p$. Then the condition reads $\cal{H}_{\sigma(t)}\in\mathrm{span}\{\dot{\sigma}(t)\}$ for every $t$. 

It turns out that $M=H\cdot \sigma$ is austere if and only if $\sigma$ is a pregeodesic and $\cal{H}_{\sigma(t)}\in\mathrm{span}\{\dot{\sigma}(t)\}$, for all $t$. Thus, the idea is to find all curves $\sigma$ satisfying both conditions, for each polar action of cohomogeneity $2$ on $\bar{M}^2(c)$, $c\neq 0$. This requires a good understanding of such actions and, in particular, one needs to determine the mean curvature vector field $\cal{H}$ on $\Sigma$ explicitly, and to compute the derivative $\bar{\nabla}_{\cal{H}}\cal{H}$. Here we skip the elementary but long calculations involved. 

As mentioned in Subsection~\ref{subsec:polar}, the unique polar action of cohomogeneity two on $\C P^2$ (up to orbit equivalence) is the action of $H=U(1)\times U(1)$. A section $\Sigma$ for this action is a totally geodesic $\R P^2$, and the orbit space $\C P^2/H$ is homeomorphic to a geodesic triangle of angles $(\pi/2,\pi/2,\pi/2)$ inside $\Sigma=\R P^2$. Due to the action of the Weyl group on $\Sigma$, it is enough to find a curve $\sigma$ in this triangle. It turns out that the only curves satisfying the above mentioned conditions are the bisectors of the three angles of the triangle. Each such a curve $\sigma$ joins a vertex of the triangle (which is a fixed point of the $H$-action) to the only minimal principal $H$-orbit, which we call a Clifford torus of $\C P^2$. Thus, the resulting hypersurface $H\cdot \sigma$ is a cone over a Clifford torus in $\C P^2$.

In $\C H^2$ there are four cohomogeneity-two polar actions up to orbit equivalence. A section $\Sigma$ for each of them is always a totally geodesic $\R H^2$. The action of $H=U(1)\times U(1)$ is in some sense dual to the corresponding action on $\C P^2$, and the only suitable curves $\sigma$ in $\Sigma$ give rise to Clifford cones (cf.~\cite[\S3.2]{GG}). The action of the group $H$ with Lie algebra $\g{h}=\g{g}_0$ admits only one curve $\sigma$ in $\Sigma$ such that $H\cdot \sigma$ is austere; in this case, the hypersurface $H\cdot \sigma$ turns out to be a bisector (cf.~\cite[\S3.4]{GG}).  The action corresponding to $\g{h}=\g{k}_0\oplus\g{g}_{2\alpha}$ does not admit any suitable curve $\sigma$. Finally, the case $\g{h}=\ell\oplus\g{g}_{2\alpha}$, with $\ell$ a 1-dimensional subspace of $\g{g}_\alpha$, admits only one suitable curve $\sigma$, and the corresponding austere hypersurface $H\cdot \sigma$ is a Lohnherr hypersurface (which is called a fan in~\cite[\S3.6]{GG}).

\section{Applications}\label{sec:aplications}

In this section we derive some characterizations of strongly 2-Hopf hypersurfaces that satisfy some additional properties. We first prove Corollaries~\ref{cor:CMC} and~\ref{cor:Levi-flat} in Subsections~\ref{subsec:strongly2HopfCMC} and~\ref{subsec:strongly2HopfLevi-flat}, respectively, and then, in Subsection~\ref{subsec:strongly2HopfCMCLevi-flat} we classify strongly 2-Hopf, Levi-flat real hypersurfaces with constant mean curvature in $\bar{M}^2(c)$, $c\neq 0$ (Theorem~\ref{th:strongly2HopfCMCLevi-flat}).

The Main Theorem guarantees that strongly 2-Hopf hypersurfaces in $\bar{M}^2(c)$, $c\neq 0$, are constructed locally as a set $H\cdot \sigma = \{h(\sigma(t)): t\in(-\varepsilon, \varepsilon), \, h\in H\}$, where $H$ is a connected group of isometries acting polarly and with cohomogeneity two on $\bar{M}^2(c)$, and $\sigma$ is a smooth curve in the regular part of a section $\Sigma$ of the $H$-action. Our purpose is to determine which curves $\sigma$ give rise to a real hypersurface with one or several additional properties.

\subsection{Strongly 2-Hopf hypersurfaces with constant mean curvature}\label{subsec:strongly2HopfCMC}

In order to prove Corollary~\ref{cor:CMC}, we assume that the mean curvature of the resulting hypersurface $H\cdot\sigma$ is constant. Thus, let $p\in \Sigma$ be a regular point, $w\in T_p\Sigma$ a tangent vector, and $\sigma$ a smooth curve in the regular part of $\Sigma$ such that $\sigma(0)=p$ and $\dot{\sigma}(0)=w$. Let $\xi$ be one of the two unit normal vector fields along $\sigma$ that are tangent to $\Sigma$, and let $\gamma$ denote the curvature of $\sigma$ with respect to $\xi$. We also denote by $\xi$ the unique extension to a smooth unit normal vector field along $H\cdot \sigma$; note that such extension is $H$-equivariant. Observe also that, by equivariance, the principal curvatures of $H\cdot \sigma$ with respect to $\xi$ are constant along each $H$-orbit. Then the mean curvature of $H\cdot \sigma$ with respect to $\xi$ will have a constant value $\eta\in \R$ if and only if the curvature function $\gamma$ satisfies $\gamma(t)=\eta-\alpha(\xi(t))-\beta(\xi(t))$ for all $t$ where $\sigma$ is defined, being $\alpha(\xi(t))$ and $\beta(\xi(t))$ the principal curvatures of the orbit $H\cdot \sigma(t)$ with respect to $\xi_{\sigma(t)}$ at the point $\sigma(t)$. In other words, we need $(\bar{\nabla}_{\dot{\sigma}}\dot{\sigma})(t)=(\eta-\alpha(\xi(t))-\beta(\xi(t)))\xi_{\sigma(t)}$ for all $t$. But, in local coordinates, this yields an ordinary differential equation of second order in normal form, so it admits a unique local solution $\sigma$ for initial conditions $\sigma(0)=p$ and $\dot{\sigma}(0)=w$. This, together with the Main Theorem, proves Corollary~\ref{cor:CMC}. Observe that, by the Main Theorem, the hypersurface with constant mean curvature constructed above is generically strongly 2-Hopf.

\subsection{Levi-flat strongly 2-Hopf hypersurfaces}\label{subsec:strongly2HopfLevi-flat}

The Levi form of a real hypersurface $M$ in a K\"ahler manifold is the symmetric bilinear map $L\colon (J\xi)^\perp \times (J\xi)^\perp \rightarrow \nu M$ defined by
\[
	L(X,Y)=\II(X,Y)+\II(JX,JY), 
\]
where $(J\xi)^\perp$ is the maximal complex distribution of $M$. Then $M$ is called Levi-flat if its Levi form vanishes identically. It is easy to check that $M$ is Levi-flat if and only if the maximal complex distribution of $M$ is integrable. Thus, ruled hypersurfaces are a very particular case of Levi-flat hypersurfaces. See \cite{He64} for more information on Levi-flat hypersurfaces.

Consider a real hypersurface $M$ in $\bar{M}^2(c)$, $c\neq 0$, satisfying $h=2$. 
We will use the notation established in Section~\ref{sec:LeviCivita}. Assume 
that $M$ is Levi-flat. Then, its Levi-form vanishes. Since $A, JA \in 
(J\xi)^\perp$ by Proposition~\ref{prop:algebra}, we have 
$\II(A,A)+\II(JA,JA)=0$ or, equivalently, $\langle S A, A\rangle+\langle SJA, 
JA\rangle=0$. Using Proposition~\ref{prop:algebra} again, this condition reads
\[
\gamma+b^2\alpha+a^2\beta=0.
\]

Now, in order to prove Corollary~\ref{cor:Levi-flat} the procedure is analogous to the one described above in~\S\ref{subsec:strongly2HopfCMC}. One just has to take into account that now the curve $\sigma$ must have curvature function $\gamma(t)=-b(\xi(t))^2\alpha(\xi(t))-a(\xi(t))^2\beta(\xi(t))$, where $a(\xi(t))$ and $b(\xi(t))$ are the norms of the orthogonal projections of $J\xi(t)$ onto the principal curvature spaces $T_{\alpha(\xi(t))}$ and $T_{\beta(\xi(t))}$ of the surface $H\cdot \sigma(t)$ with respect to $\xi(t)$, at each point $\sigma(t)$.

\subsection{Levi-flat strongly 2-Hopf hypersurfaces with constant mean curvature}\label{subsec:strongly2HopfCMCLevi-flat}
Our aim in this subsection is to prove Theorem~\ref{th:strongly2HopfCMCLevi-flat}.

Let $M$ be a Levi-flat strongly $2$-Hopf real hypersurface in $\bar{M}^2(c)$, $c\neq 0$, with constant mean curvature $\eta$. By Subsections~\ref{subsec:strongly2HopfCMC} and~\ref{subsec:strongly2HopfLevi-flat}, we have that $\gamma=\eta-\alpha-\beta$ and $\gamma=-b^2\alpha-a^2\beta$. Since $a^2+b^2=1$, we deduce $a^2\alpha+b^2\beta=\eta$.
If we take derivatives in this expression with respect to the vector field $A$ we obtain 
\[
2a\alpha Aa+a^2A\alpha+2b\beta Ab+b^2Ab=0. 
\]
From \eqref{eq:Dintegrable} in Section \ref{sec:strongly2Hopf} and from the relation $aAa+bAb=0$, we deduce the expressions of $Aa$, $Ab$, $A\alpha$ and $A\beta$ in terms of $a$, $b$ and the principal curvatures. Thus, substituting into the previous equation we obtain after some calculations that $3\gamma=\alpha+\beta$. Since $\gamma=\eta-\alpha-\beta $, then $\gamma=\eta/4$ and $\alpha+\beta=3\eta/4$.

From the equations $a^2+b^2=1$ and $\alpha b^2+\beta a^2=-\gamma$ we get the expressions $a^2=(\gamma+\beta)/(\beta-\alpha)$ and $b^2=(\alpha+\gamma)/(\alpha-\beta)$. Since $\alpha+\beta$ is constant, we have $A(\alpha+\beta)=0$. Putting together this with \eqref{eq:Dintegrable}, the previous expressions for $a^2$ and $b^2$, and the relations $\gamma=\eta/4$ and $\beta=3\eta/4-\alpha$, we obtain after some calculations that
\[
0=\eta(8\alpha^2-6\eta\alpha+3\eta^2-4c).
\]

We distinguish between the minimal and non-minimal cases. Thus, if $\eta\neq 0$, the previous equation implies that $\alpha$ is constant, and then $M$ has constant principal curvatures. But real hypersurfaces with constant principal curvatures in the complex projective and hyperbolic planes have been classified \cite{BD:pams}, \cite{Wa83} (see~\cite{Do:dga} for a survey). On the one hand, in $\C P^2$ there do not exist non-Hopf hypersurfaces with constant principal curvatures. On the other hand, in $\C H^2$ the only non-Hopf hypersurfaces with constant principal curvatures are the Lohnherr hypersurface (which is minimal), and its equidistant hypersurfaces (which are non-minimal). All of them are strongly 2-Hopf, as follows from~\cite[\S4.1]{BD09} (cf.~\cite{DD:indiana}). However, only the Lohnherr hypersurface is Levi-flat: it is the only one that satisfies the relation $\gamma=-b^2\alpha-a^2\beta$, as can be checked from~\cite[Theorem~3.12]{DD:indiana}. Hence, the case $\eta\neq 0$ is impossible.

Assume now that $\eta=0$. Then $\gamma=0$, $\beta=-\alpha$ and $a^2=b^2=1/2$. In particular, $M$ is an austere strongly 2-Hopf hypersurface in $\bar{M}^2(c)$, $c\neq 0$. By the classification achieved in Section~\ref{sec:austere}, we deduce that $M$ must be an open part of a Lohnherr hypersurface, or a Clifford cone, or a bisector. Finally, observe that all these examples are Levi-flat, since they are ruled. This concludes the proof of Theorem~\ref{th:strongly2HopfCMCLevi-flat}.

\end{document}